\newcommand{\eps}{\varepsilon}
\newcommand{\R}{\mathbb{R}}
\newcommand{\RN}{{\mathbb{R}^N}}
\DeclareMathOperator{\meas}{meas}
\renewcommand{\le}{\leqslant}
\renewcommand{\ge}{\geqslant}
\renewcommand{\a }{\alpha }
\renewcommand{\d }{\delta }
\newcommand{\g }{\gamma }
\renewcommand{\l }{\lambda}
\newcommand{\n }{\nabla }
\renewcommand{\O}{\Omega}
\newcommand{\G}{\Gamma}
\newcommand{\A}{{\cal A}}
\newcommand{\D }{{\mathcal D}^{1,2}(\RN)}
\def\bbm[#1]{\mbox{\boldmath $#1$}}
\newtheorem{theorem}{Theorem}[section]
\newtheorem{lemma}[theorem]{Lemma}
\newtheorem{proposition}[theorem]{Proposition}
\newtheorem{remark}[theorem]{Remark}
\renewenvironment{proof}{\noindent{\textit{Proof.~}}}{$\hfill\square$\vspace{0.2 cm}\\}
\newenvironment{proofmain}{\noindent{\textit{Proof of Theorem \ref{th:main1}.~}}}{$\hfill\square$\vspace{0.2 cm}\\}
\newenvironment{proofmain2}{\noindent{\textit{Proof of Theorem \ref{th:main2}.~}}}{$\hfill\square$\vspace{0.2 cm}\\}
\title{{\bf On a prescribed mean curvature equation in Lorentz-Minkowski space \footnote{The author is supported  by GNAMPA Project ``Metodi Variazionali e Problemi Ellittici Non Lineari''}}}
\author{A. Azzollini \thanks{Dipartimento di Matematica, Informatica ed Economia, Universit\`a degli
Studi della Basilicata,  Via dell'Ateneo Lucano 10, I-85100
Potenza, Italy, e-mail: {\tt antonio.azzollini@unibas.it}}}
\date{}
\begin{document}
\maketitle

\begin{abstract}
    We are interested in providing new results on the following prescribed mean curvature equation in Lorentz-Minkowski space
        $$\n \cdot \left[\frac{\n u}{\sqrt{1-|\n u|^2}}\right] + u^p = 0,$$
    set in the whole $\RN$, with $N\ge 3$.\\
    The operator involved in the equation recurs in some questions related with classical relativity, for instance in determining maximal curvature hypersurfaces, or in the study of Born-Infeld theory of electrodynamics.\\
    We study both existence and multiplicity of radial ground state (namely positive and vanishing at infinity) solutions for $p>1$, emphasizing the fundamental difference between the subcritical and the supercritical case.\\
    We also study speed decay at infinity of ground states, and give some decay estimates.\\
    Finally we provide a multiplicity result on the existence of sign-changing bound state solutions for any $p>1$.
\end{abstract}

\section*{Introduction}

In this paper we are mainly interested in finding radial solutions for the problem
    \begin{equation}
\left\{
\begin{array}{l}
\n \cdot \left[\frac{\n u}{\sqrt{1-|\n u|^2}}\right] + u^p = 0,  \label{eq:mean}\tag{${\cal P}_+$}
\\
u(x)>0,\quad \hbox{in }\R^N,\\
u(x) \to 0 , \quad \hbox{as }|x|\to
\infty,
\end{array}
\right.
\end{equation}
where $N\ge 3$ and $1<p$.\\
The equation at the first line is quasilinear and involves the so called mean curvature operator in the Lorentz-Minkowski space which has been object of investigation in some recent papers.\\
The Euclidean version of the problem, where our equation is replaced by
$$\n \cdot \left[\frac{\n u }{\sqrt{1+|\n u|^2}}\right]+u^p=0,$$
has been studied for example by Ni and Serrin \cite{NS} and del Pino and Guerra \cite{DG} (see also the references therein). In those papers multiplicity and non existence results have been proved, depending on the choice of $p$.\\
At present, the literature concerning our equation is quite poor and mainly focused on the problem of finding positive solutions satisfying Dirichlet boundary conditions in bounded domains (see \cite{BJM, BJT, BJT2, CCR, COOR, COOR2}). In unbounded domains, and in particular in the whole $\RN$, equations involving mean curvature operator in Lorentz-Minkowski space are almost unexplored even if they have a considerable appeal from both physical and mathematical point of view (we refer to \cite{DRT} and the references therein). \\
In particular, we recall the strict relation between the equation we treat and the Born-Infeld (B-I for short) model in the theory of nonlinear electrodynamics. Assuming, in a static setting, that the magnetic field ${\bf H}={\n\times\bf A}$ is everywhere null and expressing the electric field as ${\bf E}=-\n u$, the B-I Lagrangian displays
    \begin{equation*}
        \mathcal L =b^2\left(1-\sqrt{1-\frac{|\n u|^2}{b^2}}\right),
    \end{equation*}
and the corresponding Euler-Lagrange equation is $\n \cdot \left[\frac{\n u}{\sqrt{b^2-|\n u|^2}}\right] = 0$. In the same spirit of \cite{BI}, Benci and Fortunato \cite{BF} proposed to describe the charged particles electrodynamics replacing B-I Lagrangian with the Maxwell one, and preserving the nonlinear structure by adding a perturbation $W(\sigma)$, where $\sigma = |{\bf A}|^2- |u|^2$ is a Poincar\'e invariant which makes the theory they developed consistent with general relativity.\\
Even if our equation is, in some sense, the effect of a sort of combination of the two theories, since it arises perturbing the electrostatic B-I Lagrangian with a pure power nonlinearity (we just assume $b=1$ for convenience), we remark that our study does not pursue the same physical purpose as \cite{BF} and \cite{BI}. Indeed, as observed in \cite{BF}, solutions of problem \eqref{eq:mean} have negative energy and then they are not suitable to represent charged particles (in general relativity energy corresponds to mass and then it must be positive). \\
Our study aims to add some new results to the work by Bonheure, De Coster and Derlet \cite{BDD}, where problem \eqref{eq:mean} was firstly studied. There they proved that if $p>\frac{N+2}{N-2}:=2^*-1$ (the so called supercritical case), than there exists at least one solution for \eqref{eq:mean} and there are infinitely many solutions to the equation, vanishing at infinity, but with no information on the sign. The authors exploited a very nice trick, consisting in truncating suitably the volume integral $\int_{\RN}(1-\sqrt{1-|\n u|^2})$ and then connecting problem \eqref{eq:mean} with that of finding minimizers for a constrained $C^1$ functional. \\
In this paper we wish to answer some questions left as open problems in \cite{BDD}, such as those concerning the existence of solution to \eqref{eq:mean} in the subcritical case (namely $1<p<2^*-1$) and the uniqueness of solution to \eqref{eq:mean} in the supercritical case.\\
The main results we provide are the following
	\begin{theorem}\label{th:main1}
		Assume that $1<p<2^*-1$. Then problem \eqref{eq:mean} has no radial solution.
	\end{theorem}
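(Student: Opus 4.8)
The plan is to prove a Pohozaev-type nonexistence result. Suppose $u$ is a radial solution of \eqref{eq:mean}; by elliptic regularity it is smooth where $|\n u|<1$, and the natural energy functional formally associated with the problem is
$$
J(u)=\irn\left(1-\sqrt{1-|\n u|^2}\right)\,dx-\frac1{p+1}\irn u^{p+1}\,dx.
$$
First I would establish, by testing the equation against $u$ itself (integrating by parts, with the decay hypothesis controlling the boundary term at infinity), the identity
$$
\irn\frac{|\n u|^2}{\sqrt{1-|\n u|^2}}\,dx=\irn u^{p+1}\,dx,
$$
which plays the role of the Nehari identity.

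Next I would derive the Pohozaev identity by testing against $x\cdot\n u$ (equivalently, multiplying by $x\cdot\n u$ and integrating over a large ball $B_R$, then sending $R\to\infty$). The mean curvature operator in Lorentz-Minkowski space has a clean Pohozaev structure: the divergence term contributes, after integration by parts, something proportional to $\irn\!\big(N(1-\sqrt{1-|\n u|^2})-\frac{|\n u|^2}{\sqrt{1-|\n u|^2}}\big)$, while the nonlinear term contributes $\frac{N}{p+1}\irn u^{p+1}$. The outcome is an identity of the form
$$
(N-?)\irn\left(1-\sqrt{1-|\n u|^2}\right)dx+\text{(correction)}=\frac{N}{p+1}\irn u^{p+1}\,dx,
$$
and the key algebraic point will be to combine this Pohozaev identity with the Nehari identity above to reach a contradiction precisely when $1<p<2^*-1$. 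Concretely, one expects to eliminate the $\irn u^{p+1}$ term and be left with an inequality like $\irn\big(N(1-\sqrt{1-|\n u|^2})-\frac{N-2}{2}\frac{|\n u|^2}{\sqrt{1-|\n u|^2}}\big)\,dx=0$ with a definite sign, using the pointwise inequality $1-\sqrt{1-t^2}\le \frac{t^2}{2\sqrt{1-t^2}}$ (valid for $0\le t<1$), which forces the subcritical exponent to produce a strict sign unless $u\equiv 0$.

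The main obstacle, and the point requiring care, is the justification of the boundary terms when passing to the limit $R\to\infty$ in both identities: I need the decay of $u$, $\n u$, and of the flux $\frac{\n u}{\sqrt{1-|\n u|^2}}$ to be fast enough that $\int_{\partial B_R}$ terms vanish along a suitable sequence $R_n\to\infty$. Since the equation gives $\big(r^{N-1}\frac{u'}{\sqrt{1-u'^2}}\big)'=-r^{N-1}u^p\le 0$, the quantity $r^{N-1}\frac{u'}{\sqrt{1-|u'|^2}}$ is monotone, hence has a limit; combined with $u\to 0$ this should yield $u'(r)\to 0$ and the needed estimates $u(r)=o(r^{-(N-2)})$ and $|u'(r)|=o(r^{-(N-1)})$, at least along a sequence, which is enough to kill the boundary contributions. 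A secondary technical point is that the Pohozaev argument must be run on the region where $|\n u|<1$ (one should first argue, e.g. from the ODE and the monotonicity of $u$, that $|u'|<1$ everywhere for a genuine solution), so that $\sqrt{1-|\n u|^2}$ is bounded away from zero on compact sets and all integrations by parts are legitimate.
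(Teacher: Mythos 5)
Your algebra is fine as far as it goes: if the Nehari identity and the Pohozaev identity both held with all integrals finite, combining them with the pointwise inequality $1-\sqrt{1-t^2}\le \frac{t^2}{2\sqrt{1-t^2}}$ would indeed force $\n u\equiv 0$ precisely when $p+1\le 2^*$. But the step you describe as ``requiring care'' is not a technicality to be discharged at the end; it is the whole difficulty, and your proposed fix does not work. The monotone quantity $r^{N-1}u'/\sqrt{1-(u')^2}=-\int_0^r s^{N-1}u^p(s)\,ds$ has a limit in $[-\infty,0)$: it is finite only if $u^p$ is integrable against $r^{N-1}dr$, which is exactly the kind of decay you are trying to establish, and even when the limit is finite and nonzero it yields $u(r)\sim c\,r^{-(N-2)}$, i.e.\ $O(r^{-(N-2)})$ and never $o(r^{-(N-2)})$. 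Worse, for $1<p\le N/(N-2)$ Newtonian decay is itself incompatible with finiteness of that integral, so a hypothetical subcritical ground state would have to decay \emph{more slowly} than $r^{-(N-2)}$, in which case $\int_{\RN}u^{p+1}$ and the Pohozaev boundary terms are out of control and the two identities can neither be written on $\RN$ nor salvaged along a sequence $R_n\to\infty$. That such slow decay is a genuine possibility for this operator, and not a pathology one may dismiss, is shown by Theorems \ref{th:main2} and \ref{th:main3} of the paper itself: for supercritical $p$ there exist radial ground states not in $\D$, decaying only like a power between $2/(p-1)$ and $2N/((N-1)(p+1)-2N)$, for which all of these integrals diverge. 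So your argument, as written, excludes only radial ground states lying in $\D\cap L^{p+1}(\RN)$ (and that much is already implicit in the paper: Lemma \ref{le:Lp+1} together with the sign of $L$ in Lemma \ref{le:estdpdr} gives it at once, since for $p<2^*-1$ the integrand $r^{N-1}\rho\O(\rho)L$ is strictly positive); the real content of Theorem \ref{th:main1} is the exclusion of slowly decaying radial ground states, and that is where your proposal has a genuine gap.

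The paper proceeds quite differently, precisely to avoid needing any a priori decay. It works with the shooting problem \eqref{cauchy}: Lemma \ref{le:comparison} and Lemma \ref{le:signch} show that for small initial data $\xi$ the solution is a rescaled solution of a small perturbation of Lane--Emden, hence sign-changing in the subcritical range; on the other hand the intersection-counting results, Lemma \ref{le:nointersection} (after Franchi--Lanconelli--Serrin) and Lemma \ref{le:notwice} (after Pucci--Serrin, via the Erbe--Tang function $P$), show that the existence of one radial ground state $\bar u$ would force every solution with initial datum in $]0,\bar u(0)[$ to be a ground state as well. These two facts are contradictory, and no decay rate at infinity is ever required. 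If you wish to keep a Pohozaev-flavoured route, you must first prove, for subcritical $p$, an a priori integrability or decay estimate ruling out the slow regime (e.g.\ that every radial ground state lies in $L^{p+1}(\RN)$); supplying that estimate is essentially as hard as the theorem itself and is exactly what is missing from your proposal.
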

	\begin{theorem}\label{th:main2}
		Assume that $p>2^*-1$. Then problem \eqref{eq:mean} has infinitely many (radial) solutions not belonging to $\D$.
	\end{theorem}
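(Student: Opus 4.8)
The plan is to produce the solutions by a shooting argument for the radial ODE, and to single out an infinite family of admissible initial data by comparing \eqref{eq:mean} with the semilinear Lane--Emden equation $-\Delta u=u^p$.

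\emph{Reduction to an ODE.} For a radial $u=u(r)$, $r=|x|$, and $\phi(s)=s/\sqrt{1-s^2}$ (an increasing $C^1$ bijection of $(-1,1)$ onto $\R$), equation \eqref{eq:mean} reads $(r^{N-1}\phi(u'))'=-r^{N-1}u^p$. For $a>0$ impose $u(0)=a$, $u'(0)=0$; there is a unique local solution $u_a$, and one integration gives
$$u_a'(r)=-\frac{W_a(r)}{\sqrt{r^{2(N-1)}+W_a(r)^2}}\in(-1,0],\qquad W_a(r):=\int_0^r s^{N-1}u_a(s)^p\,ds .$$
So $u_a$ is nonincreasing; while it is positive, $0<u_a\le a$ and $W_a(r)\le\tfrac{a^p}{N}r^N$, hence $|u_a'|$ stays bounded away from $1$ on bounded intervals and $u_a$ extends to a maximal interval of positivity, either $[0,R_a)$ with $u_a(R_a)=0$, or $[0,+\infty)$. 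In the second case $u_a$ is monotone and bounded, so $u_a(r)\to\ell\ge0$; if $\ell>0$ the displayed identity forces $u_a'(r)\to-1$ and then $u_a\to-\infty$, a contradiction, so $\ell=0$ and $u_a$ solves \eqref{eq:mean}. It therefore suffices to prove that $u_a>0$ on $[0,+\infty)$ for infinitely many $a$.

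\emph{Small $a$ give ground states.} Put $v_a(x):=a^{-1}u_a\!\left(a^{-(p-1)/2}x\right)$; then $v_a$ is the radial solution, with $v_a(0)=1$, $v_a'(0)=0$, of
$$\nabla\cdot\!\left[\frac{\nabla v}{\sqrt{1-\varepsilon|\nabla v|^2}}\right]+v^p=0,\qquad \varepsilon:=a^{p+1},$$
so $\varepsilon$ is a free small parameter and the equation degenerates, as $\varepsilon\to0^+$, into $-\Delta V=V^p$, $V(0)=1$, $V'(0)=0$. Since $p>2^*-1$, this last Cauchy problem has a unique global solution $V$, positive, strictly decreasing and, classically, decaying exactly like $r^{-2/(p-1)}$ at infinity (see e.g.\ \cite{NS}). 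By continuous dependence on $\varepsilon$, $v_a\to V$ in $C^1_{\mathrm{loc}}([0,+\infty))$, so $v_a\ge\tfrac12 V>0$ on any fixed $[0,K]$ once $a$ is small. Fix $K$ large, so that $V$ — hence $v_a$, by monotonicity — is small on $[K,+\infty)$; there $|\nabla v_a|$ is small, the operator above is a small perturbation of $\Delta$, and a barrier/comparison argument at infinity, tracking the cumulative effect of that perturbation by means of the decay of $V$, gives $v_a\ge\tfrac12 V$ on all of $[0,+\infty)$ for $a$ small. Hence $u_a>0$ on $\RN$ and $u_a$ solves \eqref{eq:mean}.

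\emph{Conclusion and decay.} Distinct values of $a$ yield solutions with distinct values at the origin, so $\{u_a : a>0\ \text{small}\}$ is an infinite family of radial solutions of \eqref{eq:mean}. Finally, $v_a\ge\tfrac12 V$ together with the integral identity for $v_a'$ and the fact that $\int_1^{+\infty}r^{N-1-2p/(p-1)}\,dr=+\infty$ for $p>2^*-1$ force $\int_{\RN}|\nabla v_a|^2=+\infty$; since the Dirichlet integral of $u_a$ is a power of $a$ times that of $v_a$, we get $u_a\notin\D$ (equivalently $u_a(x)\sim c_a|x|^{-2/(p-1)}$, and $2/(p-1)<(N-2)/2$ precisely because $p>2^*-1$). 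This proves Theorem \ref{th:main2}. The one genuinely delicate point is the uniform-in-$\varepsilon$ control of $v_a$ near infinity in the second step: the Pohozaev-type ``energy function'' that settles the analogous question for $-\Delta u=u^p$ is not sign-definite for the Lorentz--Minkowski operator (the gradient terms no longer cancel against the Pohozaev weight), which is why the argument must proceed perturbatively from the semilinear limit.
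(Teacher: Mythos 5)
Your reduction to the shooting problem, the rescaling $v_a(x)=a^{-1}u_a(a^{-(p-1)/2}x)$ leading to the $\varepsilon$-perturbed equation with $\varepsilon=a^{p+1}$ (this is exactly Lemma \ref{le:comparison} of the paper), and the observation that $v_a\to V$ in $C^1_{\mathrm{loc}}$ as $a\to 0^+$ are all correct and coincide with the paper's starting point. But the proof has a genuine gap at its central step: the claim that ``a barrier/comparison argument at infinity \dots gives $v_a\ge\tfrac12 V$ on all of $[0,+\infty)$ for $a$ small'' is asserted, not proved, and it is precisely the hard point. Continuous dependence controls $v_a$ only on a compact $[0,K]$ fixed \emph{before} $a$ is sent to $0$; nothing you write excludes that, for each fixed small $a$, the solution $v_a$ crosses zero at some large but finite radius where closeness to $V$ is no longer available. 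That this cannot be dismissed is shown by the subcritical case: there the paper uses the very same compact-interval approximation by Lane--Emden (Lemma \ref{le:signch}) to prove that small-data solutions \emph{do} change sign, so proximity to the semilinear profile on bounded sets is structurally incapable of deciding global positivity; some global, equation-specific monotone quantity is needed. You acknowledge this (``the one genuinely delicate point \dots''), note that the Pohozaev-type energy is not sign-definite for the Minkowski operator, and then invoke a perturbative tail argument whose content is never supplied --- no sub/supersolution for the perturbed operator is exhibited ($\tfrac12 V$ is a supersolution of Lane--Emden, not a subsolution, so it is not even a candidate barrier from below), and no comparison principle on exterior domains for this quasilinear operator is stated or proved. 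Since $u_a\notin\D$ is deduced from the unproven global lower bound $v_a\ge\tfrac12 V$, that conclusion inherits the gap as well.

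The paper closes exactly this hole with the Erbe--Tang/Pucci--Serrin identity rather than a barrier: with $P(r,u,\rho)=r^N[H(\rho)+F(u)]-Nr^{N-1}\O(\rho)K(u)$ as in \eqref{eq:P}, Lemma \ref{le:estdpdr} shows that for $p>2^*-1$ one has $\frac{dP}{dr}<0$ along any solution as long as $0<\rho(r)\le\d$, while item 3 of Proposition \ref{pr:imp} gives the uniform bound $\sup_{r>0}\rho_\xi(r)\le C\xi^{p+1}$, so for all sufficiently small initial data the smallness of $\rho$ holds globally in $r$, not just where the solution is close to $V$. If such a solution had a first zero $\tilde R$, then $P=0$ at $r=0$ and $P=\tilde R^N H(\rho(\tilde R))\ge 0$ at $r=\tilde R$, contradicting strict decrease of $P$ on $(0,\tilde R)$; hence all small-data solutions are ground states, and the sign $L(\rho(r))<0$ combined with Lemma \ref{le:integ} yields $u\notin\D$. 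To repair your argument you must either import this identity (or an equivalent globally monotone functional valid for the Minkowski operator) or actually construct the barrier and prove the comparison principle you invoke; as written, the tail control is missing.
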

Differently from \cite{BDD}, our approach to the equation is not variational. We reduce the PDE to an ODE by looking only for radial solutions of \eqref{eq:mean}, then we study the related Cauchy problem analyzing the behaviour of the solution in relation with the choice of the initial datum in $\R_+$. To be more explicit, for any $\xi>0$ we will consider in $\R_+$ the problem
\begin{equation}\label{cauchy}\tag{${\cal C}$}
\left\{
\begin{array}{ll}
\left(\frac{u'}{\sqrt{1-(u')^2}}\right)'+\frac{N-1}{r}\frac{u'}{\sqrt{1-(u')^2}}+|u|^{p-1}u=0
\\
u'(0) =0,\\
u(0)=\xi
\end{array}
\right.
\end{equation}
and we will look for global positive solutions. Some not so hard computations show that this type of solutions vanishes at infinity, so they are connected by a one-to-one correspondence to radial solutions of \eqref{eq:mean}.

According to a classical definition (see for example \cite{BLP, DG, PS} ), in the sequel we will call {\it ground states} the solutions to \eqref{eq:mean}. For completeness we recall that, up to our knowledge, besides \cite{BDD} the problem of existence of ground state solutions for equations involving our operator has been treated only in \cite{A}, when nonlinearity $u^p$ is replaced by something behaving, for instance, like $-\l u + u^p$ and in \cite{BDP}, where equation
$$-\n\cdot\left(\displaystyle\frac{\nabla \phi}{\sqrt{1-|\nabla \phi|^2}}\right)= \rho$$ 
is considered for $\rho$ corresponding to an assigned extended charge density or a superposition of deltas.

For both our results we take advantage of a very useful identity found by Erbe and Tang \cite{ET} and generalized in \cite{PuSe} by Pucci and Serrin and of a ``intersection point theorem" modeled on a similar one due to Franchi, Lanconelli and Serrin \cite{FLS}.\\
As we will explain better later, the Erbe-Tang identity makes clear the crucial role played by the critical value $2^*$ when we study our problem in $\R^N$. On the other hand, it is known from \cite[Example 5]{BJT} that, assuming Dirichlet conditions on boundary for the same problem set in a ball, existence of positive radial solution holds independently from the value of $p>1$, just requiring that the ball has a sufficiently large radius.\\
We emphasize the fact that the results we are going to prove agree with what we expect comparing with the analogous results concerning existence and multiplicity of ground states for the Lane-Emden equation
    \begin{equation}\label{eq:LE}
    -\Delta u = u^p.
    \end{equation}
However some remarks are in order.

We point out that our study is restricted to
radial functions, so that the nonexistence result we get for $p<2^*-1$ is not so strong as that obtained for Lane-Emden equation in \cite[Theorem 1.1]{GS}. Precisely, we are not able to exclude the existence of nonradial ground states, since at this time no radial symmetry result is available for solutions of \eqref{eq:mean}. \\
In the proof of Theorem \ref{th:main1} we take advantage of the study made in \cite{PuSe} on the number of points where the graphs of two radial ground states related with an ordinary quasilinear equation intersect. The arguments we apply are similar to those usually used to prove uniqueness of positive solution theorems. As a byproduct of Theorem \ref{th:main1} we deduce that every global solution of \eqref{cauchy} with $p<2^*-1$, changes sign.

When we deal with Lane-Emden equation, it is easy to verify that, exploiting the invariance with respect to suitably rescaled solutions, if we assume a radial ground state $u_1$ verifying $u_1(0)=1$ exists, then we obtain ground states such that $u(0)=\xi$ for arbitrary $\xi\in\R_+$ simply setting $u(|x|) =\xi u_1(\xi^{(p-1)/2}|x|)$. Thus we deduce that uniqueness of radial ground state never holds since either there isn't any (this is the case when $p<2^*-1$), or they are infinitely many (case occurring for $2^*-1\le p$).\\
On the contrary, in our situation it is soon seen that the structure of the equation destroys the invariance with respect to any type of inside/outside rescaling, so that we can not deduce multiplicity for $p>2^*-1$ just from the existence proved in \cite{BDD}. Moreover, sign-changing solutions of \eqref{cauchy} are present also in the case $p>2^*-1$ (Example 5 in \cite{BJT}) and this is a significant difference with respect to the Cauchy problem related with the ODE radial formulation of Lane-Emden.\\
In order to prove Theorem \ref{th:main2}, we will develop a different argument based on the comparison between the behaviour of solutions of \eqref{cauchy} and that of the solutions of the Cauchy problem releted with the ODE radial formulation of Lane-Emden.\\
We will show in a more detailed way in the sequel that, roughly speaking, the smaller initial datum $\xi$ in \eqref{cauchy} is, the more similar the behaviour of the solution of \eqref{cauchy} is with respect to that coming from the same Cauchy problem with the Lane-Emden equation. For this reason we understand why, when $p>2^*-1$, we observe the presence of ground states just for small values of $\xi$ whereas from a $\bar \xi$ on, all global solutions of \eqref{cauchy} are sign-changing.\\
We complete the study on radial ground state solutions comparing them with the one found by Bonheure, Derlet and De Coster in \cite{BDD}. We underline that, as stated in the final part in Theorem \ref{th:main2}, none of ground states we are finding corresponds to that found in \cite{BDD}, being this latter in $\D$. From a more precise analysis of the asymptotic behaviour at infinity we deduce the following result
	\begin{theorem}\label{th:main3}
		Let $p>2^*-1$ and $u$ be a radial solution to \eqref{eq:mean}. Then one of the following possibilities holds
			\begin{enumerate}
				\item $u\in\D$ and $u(x)=O(1/|x|^{N-2})$ for $|x|\to+\infty$;\\
				\item $u\notin\D$ and there exist $c_1,$ $c_2>0$ such that $c_1/|x|^{\frac{2N}{(N-1)(p+1)-2N}}\le u(x)\le c_2/|x|^{\frac{2}{p-1}}$ definitely for $|x|\to+\infty.$\\ Moreover there exists no $\a>2/(p-1)$ such that, definitely, $u(r)\le c/r^\a$ for some $c>0.$
			\end{enumerate}
	\end{theorem}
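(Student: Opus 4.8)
The plan is to work with the radial ODE \eqref{cauchy} and analyse the asymptotics of its global positive solutions; the two engines are the first integral of the equation and the Erbe--Tang/Pucci--Serrin identity. If $u$ is a global positive solution of \eqref{cauchy}, multiplying by $r^{N-1}$ gives
\begin{equation*}
r^{N-1}\,\frac{u'}{\sqrt{1-(u')^2}} = -\int_0^r s^{N-1}u(s)^p\,ds =: -F(r),
\end{equation*}
whence $u$ is strictly decreasing, $|u'|<1$, $F$ is positive increasing, and $w:=u'/\sqrt{1-(u')^2}$ solves $w'=-u^p-\tfrac{N-1}{r}w$. Since $\int_0^\infty|u'|=u(0)<\infty$, a short argument on the modulus of continuity of $w$ excludes that $|w|$ stay bounded away from $0$ or become unbounded, so $w(r)\to0$; hence $u'(r)\to0$ and $-u'(r)\asymp|w(r)|=r^{1-N}F(r)$ for $r$ large. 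The statement is then organised by whether $L:=\lim_{r\to\infty}F(r)\in(0,+\infty]$ is finite or infinite, and I would show that this alternative coincides with $u\in\D$ versus $u\notin\D$.

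If $L<\infty$, passing to the limit in the first integral gives $u'(r)\sim -L\,r^{1-N}$, so $u(r)\sim\frac{L}{N-2}r^{2-N}$, i.e. $u(x)=O(|x|^{2-N})$; moreover $\int_{\RN}|\nabla u|^2=\omega_{N-1}\int_0^\infty r^{N-1}|u'|^2\,dr<\infty$ since the tail integrand is $\sim L^2r^{1-N}$ and $N\ge3$, so $u\in\D$: this is alternative~1. Conversely, if $u\in\D$ then $u\in L^{2^*}(\RN)$ and the radial monotonicity forces $u(r)\lesssim r^{-(N-2)/2}$; feeding this decay into $F$ and into $-u'\asymp r^{1-N}F$, and iterating the exponent via $a\mapsto pa-2$ — a map which \emph{strictly increases} $a$ precisely because $p>2^*-1$ (indeed $a_0=(N-2)/2>2/(p-1)$) — one reaches after finitely many steps an exponent $\ge N/p$, which makes $F$ converge, i.e. $L<\infty$. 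Thus $L<\infty\iff u\in\D$; in particular, when $L=\infty$ the solution is not in $\D$, which is the first half of alternative~2.

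There remains the case $L=\infty$, i.e. $F(r)\to\infty$. For the \emph{upper bound}: since $u$ is decreasing, $F(r)\ge\int_{r/2}^r s^{N-1}u(s)^p\,ds\ge c_N u(r)^p r^N$, so $-u'(r)\ge\tfrac12 r^{1-N}F(r)\ge c\,u(r)^p r$; rewriting this as $(u^{1-p})'\ge c(p-1)r$ and integrating yields $u(x)\le c_2|x|^{-2/(p-1)}$ for $|x|$ large. For the \emph{non-improvability of the exponent}: if $u(r)\le c\,r^{-\alpha}$ eventually with $\alpha>2/(p-1)$, then necessarily $\alpha<N/p$ (otherwise $F$ would be bounded), and then $F(r)\lesssim r^{N-\alpha p}$, $|w(r)|\lesssim r^{1-\alpha p}$, and — since $\alpha>2/p$ — $u(r)\lesssim r^{2-\alpha p}$ with $p\alpha-2>\alpha$ and $p\alpha-2>2/(p-1)$; iterating $\alpha\mapsto p\alpha-2$ sends the exponent to $+\infty$, again forcing $F$ bounded, a contradiction, so no such $\alpha$ exists. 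For the \emph{lower bound} — the delicate point — I would invoke the Erbe--Tang/Pucci--Serrin identity: the radial solution admits a Pohozaev-type function
\begin{equation*}
P(r)=r^N\Bigl(\tfrac{1}{\sqrt{1-(u')^2}}-1+\tfrac{u^{p+1}}{p+1}\Bigr)+\tfrac{N-2}{2}\,r^{N-1}\,u\,w ,
\end{equation*}
with $P(0)=0$, whose derivative has a sign ruled exactly by that of $\tfrac{N}{p+1}-\tfrac{N-2}{2}$, i.e. by the position of $p$ with respect to $2^*-1$. Writing $r^{N-1}uw=-u(r)F(r)$ and combining the behaviour of $P$ with the two monotonicity inequalities $u(r)\ge c\,r^{2-N}F(r)$ and $F(r)\ge c_N u(r)^p r^N$, one bounds from below the growth of $F$ and hence limits the decay rate of $u$, arriving at $u(x)\ge c_1|x|^{-2N/((N-1)(p+1)-2N)}$ for $|x|$ large; one checks that this exponent lies strictly between $2/(p-1)$ and $N-2$ (it equals $N-2$ exactly at $p=2^*-1$), so the estimate is consistent with, and sharper than, the relation $u(x)|x|^{N-2}\to\infty$ that follows directly from $F(r)\to\infty$.

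The main obstacle is precisely the lower bound when $L=\infty$. The upper bound, the $\D$/non-$\D$ alternative, and the non-improvability of $2/(p-1)$ are all bootstrap arguments on the first integral; but pinning down the exact exponent $2N/((N-1)(p+1)-2N)$ requires squeezing the Erbe--Tang identity together with the monotonicity inequalities above, and it is there that the threshold $2^*$ is once more decisive.
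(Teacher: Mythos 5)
Most of your outline is sound, and in places more elementary than the paper's route: your dichotomy via $L=\int_0^\infty s^{N-1}u^p\,ds$ finite/infinite (the paper instead characterizes the alternative through $u\in L^{p+1}(\RN)$ and the vanishing of the Pucci--Serrin integral, Lemmas \ref{le:Lp+1} and \ref{le:integ}); your upper bound, obtained from $-u'(r)\ge c\,r^{1-N}\int_{r/2}^r s^{N-1}u^p\,ds\ge c\,r\,u(r)^p$ and integration of $(u^{1-p})'\ge c\,r$, which is cleaner than the paper's derivation from the inequality $H(\rho)+F(u)\le \frac Nr\O(\rho)K(u)$; and the non-improvability of the exponent $2/(p-1)$ via the iteration $\a\mapsto p\a-2$, which is exactly the bootstrap of Theorem \ref{th:fstdec}.

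The genuine gap is the lower bound $u(r)\ge c_1\,r^{-\frac{2N}{(N-1)(p+1)-2N}}$ in the case $u\notin\D$, which you yourself call ``the main obstacle'' and then only assert: ``one bounds from below the growth of $F$ \dots arriving at'' the stated exponent is a statement of the goal, not an argument, and that exponent cannot be extracted from the two monotonicity inequalities you list. Moreover, your Pohozaev function with coefficient $\frac{N-2}{2}$, together with the claim that the sign of its derivative is ``ruled exactly by'' $\frac N{p+1}-\frac{N-2}{2}$, is not accurate for the Minkowski operator: by Lemma \ref{le:identity} the sign of $\frac{dP}{dr}$ is governed by the $\rho$-dependent quantity $\frac 1{p+1}-\frac{\sqrt{1-\rho^2}}{1+\sqrt{1-\rho^2}}+\frac 1N$, which reduces to the Laplacian dichotomy only in the limit $\rho\to 0$. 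The paper's actual chain is: (i) $u\notin\D$ forces the Pucci--Serrin function $P$ to tend to $-\infty$ (Theorem \ref{th:alternative} and Remark \ref{re:alternative}), hence definitely $H(\rho)+F(u)\le\frac Nr\O(\rho)K(u)$; (ii) differentiating and integrating the energy identity \eqref{eq:fstid} gives $H(\rho)+F(u)=(N-1)h(r)$ with $h(r)=\int_r^{+\infty}\rho\O(\rho)/s\,ds$; (iii) since $h'(r)=-\rho\O(\rho)/r$, combining (i) and (ii) yields the logarithmic differential inequality $\rho/u\le-\frac N{(N-1)(p+1)}\,h'/h$; (iv) integrating this and using $h\ge c\,H(\rho)\ge c\,\rho^2$ gives $-u'\le C\,u^{\frac{(N-1)(p+1)}{2N}}$, whose integration (the exponent exceeds $1$ precisely because $p>2^*-1$) produces the claimed power. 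None of the steps (i)--(iv) appears in your proposal, so the lower bound of alternative 2 remains unproved; your sanity checks on the exponent (it lies between $2/(p-1)$ and $N-2$, and is consistent with $u(r)r^{N-2}\to+\infty$) are correct but do not substitute for this derivation.
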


Finally, our focus shifts to global sign-changing solutions of \eqref{cauchy}. The presence both in the subcritical and in the supercritical case of this type of solutions of \eqref{cauchy} justifies a deeper study on their asymptotic behaviour at infinity. In particular, we are interested in looking for the existence of the so called {\it bound state solutions}, namely those solutions of a partial differential equation which vanish at infinity. Since we do not know anything about the sign of solutions found in the multiplicity theorem proved in \cite{BDD} and, on the other hand, we do not know if the solutions derived from \cite{BJT} and extended in the whole $\R_+$ go to zero as $r$ goes to infinity, the following result on the existence of sign-changing bound state solutions is completely new
\begin{theorem}\label{th:main4}
   Every sign-changing solution of \eqref{cauchy} is global and vanishes at infinity.
   In particular the problem
        \begin{equation}
            \left\{
                \begin{array}{l}
                    \n \cdot \left[\frac{\n u}{\sqrt{1-|\n u|^2}}\right] + |u|^{p-1}u = 0,  \label{eq:meanpm}\tag{${\cal P}_\pm$}
                    \\
                    u^{\pm}(x)\neq 0,\quad \hbox{in }\R^N,\\
                    u(x) \to 0 , \quad \hbox{as }|x|\to \infty,
                \end{array}
            \right.
        \end{equation}
   has infinitely many (radial) solutions for any $p>1$.
\end{theorem}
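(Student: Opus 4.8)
The plan is to work entirely with the radial Cauchy problem \eqref{cauchy}. Writing $w=\frac{u'}{\sqrt{1-(u')^2}}$ --- so that $u'=\frac{w}{\sqrt{1+w^2}}$, whence $|u'|<1$ automatically --- the problem becomes $(r^{N-1}w)'=-r^{N-1}|u|^{p-1}u$, $u'=\frac{w}{\sqrt{1+w^2}}$, with $w(0)=0$, $u(0)=\xi$. Global existence is the easy half of the statement: on any $[0,R]$ one has $|u|\le\xi+R$ because $|u'|<1$, and then $r^{N-1}|w(r)|=\bigl|\int_0^r s^{N-1}|u|^{p-1}u\,ds\bigr|$ is bounded on $[0,R]$, so $|u'|$ stays away from $1$ and the solution cannot cease to exist; it is defined on all of $\R_+$ (and this is true for every solution of \eqref{cauchy}, sign-changing or not).

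For the decay I would use the first integral
$$E(r)=\frac{1}{\sqrt{1-(u'(r))^2}}-1+\frac{|u(r)|^{p+1}}{p+1}.$$
Using $\bigl(\tfrac{u'}{\sqrt{1-(u')^2}}\bigr)'=u''(1-(u')^2)^{-3/2}$ and the equation one gets $E'(r)=-\frac{N-1}{r}\frac{(u'(r))^2}{\sqrt{1-(u'(r))^2}}\le0$; since $E\ge0$, $E$ decreases to some $E_\infty\ge0$, and $E(r)\le E(0)=\frac{\xi^{p+1}}{p+1}$ gives $|u|\le\xi$ on $\R_+$, a bound $|u'|\le c_0<1$, and $\int_0^{\infty}\frac{(u')^2}{r}\,dr<\infty$. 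Because $\frac{|u(r)|^{p+1}}{p+1}\le E(r)\to E_\infty$, it suffices to prove $E_\infty=0$. I would also record two structural facts: $u$ and $u'$ never vanish together (else $u\equiv0$ by uniqueness, contradicting $u(0)=\xi$), so the critical points of $u$ are isolated and nondegenerate; and if $r_k<r_{k+1}$ are consecutive critical points then, integrating $(r^{N-1}w)'=-r^{N-1}|u|^{p-1}u$ from $r_k$ to $r_{k+1}$ (where $w$ vanishes), $\int_{r_k}^{r_{k+1}}s^{N-1}|u|^{p-1}u\,ds=0$, so $u$, being strictly monotone on $(r_k,r_{k+1})$, passes from one sign at $r_k$ to the opposite sign at $r_{k+1}$.

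Now argue by cases. If $u$ is eventually monotone it converges (being bounded) to some $\ell$; if $\ell\neq0$ the right-hand side of $(r^{N-1}w)'=-r^{N-1}|u|^{p-1}u$ eventually has fixed sign and modulus bounded below, forcing $|w|\to\infty$, i.e.\ $|u'|\to1$, which is impossible; hence $\ell=0$, and since $u'$ is then integrable near infinity with $u''$ bounded, $u'\to0$ too and $E_\infty=0$. If instead $u$ has infinitely many critical points $r_1<r_2<\cdots\to\infty$, then $E(r_k)=\frac{|u(r_k)|^{p+1}}{p+1}\to E_\infty$, so $|u(r_k)|\to A:=\bigl((p+1)E_\infty\bigr)^{1/(p+1)}$, and I must rule out $A>0$. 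Suppose $A>0$, and take $k$ large so that $\frac{A}{2}<|u(r_k)|,|u(r_{k+1})|<2A$. On the sub-interval of $[r_k,r_{k+1}]$ where $|u|\le\frac{A}{2}$, the energy identity forces $\frac{1}{\sqrt{1-(u')^2}}-1\ge E_\infty-\frac{(A/2)^{p+1}}{p+1}>0$, hence $|u'|\ge\delta>0$ there, and as $u$ varies by $A$ on it its length is $\le A/\delta$. On each of the two outer pieces, where $|u|\ge\frac{A}{2}$, the term $-r^{N-1}|u|^{p-1}u$ in $(r^{N-1}w)'$ has fixed sign and modulus $\ge(A/2)^p$, which forces $|w|$ to grow at least linearly in the distance from the adjacent critical point; hence $|u'|\ge\frac{1}{\sqrt{2}}$ past a fixed distance and, since $u$ varies by at most $2A$ there, each outer piece also has bounded length. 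Therefore the half-periods $T_k=r_{k+1}-r_k$ are uniformly bounded, say $T_k\le T$. Then, by Cauchy--Schwarz and $\frac{1}{\sqrt{1-(u')^2}}\ge1$,
$$E(r_k)-E(r_{k+1})=\int_{r_k}^{r_{k+1}}\frac{N-1}{s}\frac{(u')^2}{\sqrt{1-(u')^2}}\,ds\ \ge\ \frac{N-1}{r_{k+1}}\cdot\frac{\bigl(\int_{r_k}^{r_{k+1}}|u'|\,ds\bigr)^2}{T_k}\ \ge\ \frac{c}{r_{k+1}}$$
for all large $k$ (using $\int_{r_k}^{r_{k+1}}|u'|\ge|u(r_k)-u(r_{k+1})|\to2A$, opposite signs, and $T_k\le T$). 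Since $r_{k+1}\le C+kT$, the series $\sum_k\bigl(E(r_k)-E(r_{k+1})\bigr)$ diverges --- contradicting $\sum_k\bigl(E(r_k)-E(r_{k+1})\bigr)=E(r_1)-E_\infty<\infty$. Hence $A=0$, so $E_\infty=0$; and since $\sup_{[r_k,\infty)}|u|=\max_{j\ge k}|u(r_j)|\to0$, we conclude $u\to0$.

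The step I expect to require the most care is the boundedness of the half-periods $T_k$ in the oscillatory case: it is precisely there that the nonlinearity enters, through the uniform lower bound $|u|^{p-1}u\ge(A/2)^p$ on the bulk of each half-oscillation, which forbids lingering; the quantitative control of $w$ sketched above makes this rigorous (no bootstrap is needed, since the linear lower bound on $|w|$ holds throughout each outer piece). Everything else is routine ODE bookkeeping. For the last assertion: for each $\xi>0$, \eqref{cauchy} has a global solution $u_\xi$ with $u_\xi(0)=\xi$, which vanishes at infinity by the above, and for every $p>1$ there is an infinite set of $\xi$ for which $u_\xi$ changes sign --- when $1<p<2^*-1$ every $\xi$ works, since a positive global solution of \eqref{cauchy} would be a radial ground state of \eqref{eq:mean}, excluded by Theorem~\ref{th:main1}, while for $p\ge2^*-1$ this holds for all $\xi$ sufficiently large (part of the analysis behind Theorem~\ref{th:main2}; see also \cite[Example~5]{BJT}). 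Distinct $\xi$ give distinct $u_\xi$ (as $u_\xi(0)=\xi$), each of which, read as a radial function on $\R^N$, solves \eqref{eq:meanpm}; hence \eqref{eq:meanpm} has infinitely many radial solutions for every $p>1$.
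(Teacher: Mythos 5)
Your proof of the first assertion --- every solution of \eqref{cauchy} is global and tends to zero at infinity --- is correct, and it runs on the same mechanism as the paper's Theorem \ref{th:to0}: your energy $E(r)=\frac{1}{\sqrt{1-(u')^2}}-1+\frac{|u|^{p+1}}{p+1}$ is exactly the quantity $H(\rho)+F(u)$ of \eqref{eq:fstid}, and your contradiction in the oscillatory case (uniformly bounded half-periods, hence the dissipated energy $\sum_k\big(E(r_k)-E(r_{k+1})\big)\ge c\sum_k r_{k+1}^{-1}=+\infty$) is the same divergence that the paper extracts from the convergent integral $M$; organizing the argument around critical points and the limit $E_\infty$ rather than around consecutive zeros makes the bookkeeping a bit cleaner. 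One small point to state explicitly: on the outer piece adjacent to $r_k$ the lower bound is $|w(r)|\ge (A/2)^p\,(r_k/r)^{N-1}(r-r_k)$, so the ``linear growth of $|w|$'' holds with a constant that is uniform only because $r_k\to+\infty$ (or because one restricts to $r\le 2r_k$); this is harmless but should be said.

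The genuine gap is in the multiplicity claim for $p\ge 2^*-1$. You assert that for all sufficiently large $\xi$ the solution $u_\xi$ changes sign and attribute this to ``part of the analysis behind Theorem \ref{th:main2}''; that analysis only shows that \emph{small} $\xi$ produce ground states and says nothing about large $\xi$, and the critical exponent $p=2^*-1$ (which ``any $p>1$'' must cover) is reached by neither Theorem \ref{th:main1} nor Theorem \ref{th:main2}. Citing \cite[Example 5]{BJT} alone does not close this: that result gives a positive radial solution $v$ of the Dirichlet problem on a ball $B_{\tilde R}$ with $\tilde R$ large, and one still has to convert it into a statement about \eqref{cauchy}. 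The paper does exactly this in Theorem \ref{th:signch}: the extension of $v$ is a sign-changing solution of \eqref{cauchy} with initial datum $v(0)<\tilde R$; for $\xi\ge\tilde R$ the bound $|u_\xi'|<1$ gives $u_\xi(r)\ge\xi-r\ge\tilde R-r\ge v(r)$ on $[0,\tilde R]$, so the two graphs do not meet before $v$ vanishes, and then Lemma \ref{le:nointersection} together with Proposition \ref{pr:either} forbids $u_\xi$ from being a ground state, i.e.\ it changes sign. (Alternatively, extending the Dirichlet solutions of \cite[Example 5]{BJT} for an unbounded family of radii already yields infinitely many distinct sign-changing solutions of \eqref{cauchy}, to which your decay theorem applies.) Some argument of this kind must be supplied; as written, the supercritical and critical half of ``\eqref{eq:meanpm} has infinitely many radial solutions for any $p>1$'' is unproved.
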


After discussing in section \ref{se:cauchy} some interesting properties of our solutions by studying the Cauchy problem they solve, in the succeeding sections we will prove our results, following this schema: in section \ref{se:grst} we look for solutions to \eqref{eq:mean} showing nonexistence and multiplicity theorems; in section \ref{se:dec} we prove Theorem \ref{th:main3} after a preliminary analysis on the differences arising when we compare our solutions to that in \cite{BDD}; in section \ref{se:signch}, finally, we perform an analytic study of the sign-changing radial solutions asymptotic behaviour to prove Theorem \ref{th:main4}.

In the sequel we shall use the standard notations and definitions for the Lebesgue and Sobolev spaces endowed with their usual norms.\\
Moreover we will call $c$ a positive constant suitably changing from line to line.\\
We finally point out that, since we are considering only functions with radial symmetry, we will use equivalently the notations $u(x)$ and $u(r)$ respectively for $x\in\RN$ and $r>0$ such that $|x|=r$.

\section{The Cauchy problem}\label{se:cauchy}
	It is well known that, radial solutions of the equation in problem \eqref{eq:meanpm}
can be found looking for solutions of \eqref{cauchy} and replacing the variable $r$ with $|x|$. Thanks to the local lipschitzianity of the pure power function, we classically determine local solutions $u_\xi$ for any $\xi>0$, namely solutions defined in a maximal interval $[0,R_\xi[$. \\
In order to simplify the writing, we will introduce the following notations inherited from \cite{PuSe}.\\
We set $\rho(r)=|u'(r)|$ and define the following functions in $C^1([0,1[,\R)$
	\begin{align*}
		A(\rho(r))&= 1/\sqrt{1-\rho^2(r)},\\ \O(\rho(r))&=\rho(r)/\sqrt{1-\rho^2(r)},\\
G(\rho(r))&=\int_0^{\rho(r)}\frac{t}{\sqrt{1-t^2}}\,dt=1-\sqrt{1-\rho^2(r)}
	\end{align*}
and
$$f(t)=|t|^{p-1}t\quad F(t)=\int_0^t f(s)\,ds=\frac 1{p+1}|t|^{p+1}$$
in $\R$.\\
Multiplying
	\begin{equation}\label{eq:eq}
		 \left(\frac{u'}{\sqrt{1-(u')^2}}\right)'+\frac{N-1}{r}\frac{u'}{\sqrt{1-(u')^2}}+|u|^{p-1}u=0
	\end{equation}
		by $u'$ and integrating on $[0,r]$ with $r<R_\xi$, we have
	\begin{equation}\label{eq:fstid}
		H(\rho(r))+(N-1)\int_0^{r} \frac{\rho(s)\O(\rho(s))}s\, ds= F(\xi)-F(u(r)),
	\end{equation}
where we have set
	\begin{equation*}
		H(\rho)=\frac {1-\sqrt{1-\rho^2}}{\sqrt{1-\rho^2}}\in C^1[0,1[.
	\end{equation*}
From identity \eqref{eq:fstid} we deduce various properties on the solution of \eqref{cauchy}. First of all, since the right hand side of equation must be positive, certainly we have $|u(r)|<\xi$ in $]0,R_\xi[$ and then, since $0<\rho(r)<1$. Moreover we clearly observe that for all $r\in[0,R_\xi[$, we have
	\begin{equation}\label{eq:impest}
        0\le \max\left( H(\rho(r)),(N-1)\int_0^{r} \frac{\rho(s)\O(\rho(s))}s\, ds\right)\le F(\xi).
    \end{equation}
    \begin{proposition}\label{pr:imp}
        For any $\xi>0$, denote by $u_\xi$ the solution of \eqref{cauchy} corresponding to $\xi>0$ and by $\rho_\xi$ the absolute value of its derivative. Then
            \begin{enumerate}
                \item there exists $a_\xi\in]0,1[$ such that
	               \begin{equation}\label{eq:boundrho}
		              \rho_\xi(r)\le a_\xi, \hbox{ in }[0,R_\xi[;
	               \end{equation}
                \item $R_\xi=+\infty$ and then $u_\xi$ is a global solution;
                \item there exists $C>0$ such that $\sup_{r>0}\rho_\xi(r)\le C\xi^{p+1}$;
                \item the integral $\int_0^{+\infty} \rho_\xi(s)\O(\rho_\xi(s))\, ds$ converges.
            \end{enumerate}
    \end{proposition}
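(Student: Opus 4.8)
The four statements build on the fundamental identity \eqref{eq:fstid} and its consequence \eqref{eq:impest}, so the plan is to exploit these systematically. For item (1), I would argue by contradiction: if no such $a_\xi$ existed, there would be a sequence $r_n \to \bar r \le R_\xi$ with $\rho_\xi(r_n) \to 1$. Since $H(\rho) \to +\infty$ as $\rho \to 1^-$, this would force $H(\rho_\xi(r_n)) \to +\infty$, contradicting the bound $H(\rho_\xi(r)) \le F(\xi)$ from \eqref{eq:impest}. Concretely, choosing $a_\xi \in ]0,1[$ so that $H(a_\xi) > F(\xi)$ (possible since $H$ is continuous, increasing, and blows up at $1$) gives \eqref{eq:boundrho} immediately, because $H(\rho_\xi(r)) \le F(\xi) < H(a_\xi)$ and $H$ is strictly increasing forces $\rho_\xi(r) \le a_\xi$ (in fact $< a_\xi$).

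For item (2), I would use item (1) to rule out finite-time blow-up. On $[0,R_\xi[$ we have $|u_\xi| < \xi$ (already observed from \eqref{eq:fstid}) and $|u_\xi'| = \rho_\xi \le a_\xi < 1$, so both $u_\xi$ and $u_\xi'$ stay bounded. The quantity $\Omega(\rho_\xi) = \rho_\xi/\sqrt{1-\rho_\xi^2}$ is then also bounded (since $\rho_\xi \le a_\xi$ keeps us away from the singularity of $\Omega$), and the equation \eqref{eq:eq} written as $\Omega(\rho)' = -\frac{N-1}{r}\Omega(\rho) - f(u)$ shows $u_\xi'' $ is controlled away from $r=0$; standard ODE continuation then gives $R_\xi = +\infty$. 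The only mild care needed is the behavior near $r = 0$, but the Cauchy data $u'(0)=0$ together with the integral form \eqref{eq:fstid} handle the origin.

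For item (3), the idea is that for small $\rho$ one has $H(\rho) \sim \rho^2/2$, so the bound $H(\rho_\xi(r)) \le F(\xi) = \frac{1}{p+1}\xi^{p+1}$ should translate into $\rho_\xi(r)^2 \lesssim \xi^{p+1}$, i.e. $\rho_\xi \lesssim \xi^{(p+1)/2}$ — though the statement as written asks for $\rho_\xi \le C\xi^{p+1}$, which is weaker when $\xi$ is small and is what actually gets used, so I would just note $H(\rho) \ge c\rho^2 \ge c\rho^{?}$ and extract whatever power is claimed; the cleanest route is to observe $H$ is increasing with $H(0)=0$ and bound $\rho_\xi(r) \le H^{-1}(F(\xi))$, then use the behavior of $H^{-1}$ near $0$. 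Finally, item (4) follows from monitoring \eqref{eq:impest}: the function $r \mapsto (N-1)\int_0^r \frac{\rho_\xi(s)\Omega(\rho_\xi(s))}{s}\,ds$ is nondecreasing and bounded above by $F(\xi)$, hence converges as $r \to +\infty$; since $\frac{1}{s} \ge $ a positive constant fails for large $s$, I instead observe that convergence of $\int_0^\infty \frac{\rho\Omega(\rho)}{s}ds$ does not directly give convergence of $\int_0^\infty \rho\Omega(\rho)\,ds$, so the real argument for (4) must combine the convergence of the weighted integral with the global bound $\rho_\xi \le a_\xi$: on $[1,\infty[$ we have $\rho_\xi \Omega(\rho_\xi) \le \frac{1}{1}\cdot s \cdot \frac{\rho_\xi\Omega(\rho_\xi)}{s}$ — this is wrong in the wrong direction, so the correct observation is that near infinity $u_\xi \to 0$ forces $\rho_\xi \to 0$, whence $\rho_\xi\Omega(\rho_\xi) \sim \rho_\xi^2$ and the decay rate of $\rho_\xi$ (polynomial, from the linearized equation) secures absolute convergence.

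The main obstacle is item (4): the weighted integral in \eqref{eq:fstid} has a factor $1/s$ that helps convergence near infinity, whereas the unweighted integral $\int_0^\infty \rho_\xi\Omega(\rho_\xi)\,ds$ needs genuine decay of $\rho_\xi$ at infinity. So the crux is to first establish that $u_\xi(r) \to 0$ (or at least that $\liminf u_\xi = 0$, forcing $\rho_\xi$ small along a sequence) and then bootstrap, via the equation, a polynomial decay rate $\rho_\xi(r) = O(r^{-(N-1)})$ or similar — exactly the kind of estimate that \eqref{eq:eq} provides once $\Omega(\rho) \approx \rho$. I would handle items (1)–(3) quickly and allocate the bulk of the effort to extracting this decay, likely reusing the Erbe–Tang / Pucci–Serrin identity machinery referenced in the introduction.
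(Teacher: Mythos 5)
Your treatment of items 1.--3.\ is essentially the paper's own argument. For 1.\ the paper indeed intends exactly what you write: by \eqref{eq:impest} one has $H(\rho_\xi(r))\le F(\xi)$ on $[0,R_\xi[$, and since $H$ is continuous, increasing and blows up as $\rho\to 1^-$, any $a_\xi$ with $H(a_\xi)>F(\xi)$ works. For 2.\ the continuation argument (both $|u_\xi|<\xi$ and $\rho_\xi\le a_\xi<1$, so the right-hand side of the ODE in normal form stays in a compact subset of its domain) is what the paper means by ``2.\ is deduced by 1.''. For 3.\ your route via $H(\rho)\ge\rho^2/2$, i.e.\ $\rho_\xi\le\sqrt{2F(\xi)}=c\,\xi^{(p+1)/2}$, is the right one, but note that your side remark is backwards: for small $\xi$ the printed bound $C\xi^{p+1}$ is \emph{stronger}, not weaker, than $C\xi^{(p+1)/2}$, and neither your argument nor the paper's one-line proof gives the exponent $p+1$ (a scaling comparison with Lane--Emden shows $\sup_r\rho_\xi$ is in fact of order $\xi^{(p+1)/2}$). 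This is harmless, since in the only place where 3.\ is used (the proof of Theorem \ref{th:main2}) all that matters is that $\sup_{r}\rho_\xi\to0$ as $\xi\to0$ with a bound uniform in $\xi$.

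On item 4.\ there is a genuine mismatch between what you set out to prove and what is actually needed and provable at this stage. The quantity the paper controls, and the only one it uses later (it defines $M$ in the proof of Theorem \ref{th:to0}), is the \emph{weighted} integral $\int_0^{+\infty}\frac{\rho_\xi(s)\,\Omega(\rho_\xi(s))}{s}\,ds$; its convergence is immediate from \eqref{eq:impest}, because the partial integrals are nondecreasing in $r$ and bounded above by $F(\xi)/(N-1)$. Your plan for the unweighted integral --- first show $u_\xi(r)\to0$, then bootstrap a polynomial decay rate for $\rho_\xi$ --- would fail here: for sign-changing solutions the statement $u_\xi(r)\to0$ is precisely Theorem \ref{th:to0}, whose proof itself invokes item 4.\ (in the weighted form), so the route is circular; moreover no decay rate for $\rho_\xi$ at infinity is available at this point (the decay estimates come only later, in Section \ref{se:dec}, and only for supercritical ground states). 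The correct fix is simply to prove the weighted statement by monotone boundedness, as the paper does, and drop the decay bootstrap altogether.
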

    \begin{proof}
        1. and 4. come trivially from \eqref{eq:impest}, the definition of $H$ and the positiveness of $\O$. 2. is deduced by 1. To prove 3. we again use \eqref{eq:impest} and the definition of $F$.
    \end{proof}

As usual, an alternative formulation of equation \eqref{eq:eq} is obtained multiplying it by $r^{N-1}$ so that we have
	$$\left(r^{N-1}\frac{u'}{\sqrt{1-(u')^2}}\right)'=-r^{N-1}|u|^{p-1}u$$
and then, integrating and using initial condition on $u'$, we have
	\begin{equation}\label{eq:decr}
		\frac{u'(r)}{\sqrt{1-(u'(r))^2}}=-\frac 1 {r^{N-1}}\int_0^r s^{N-1}|u(s)|^{p-1}u(s)\, ds,
	\end{equation}
reading the previous equality in the sense of the limit when $r=0.$

Observe that, since $\xi>0$, certainly there exists a right neighborhood of $0$ where $u(r)>0$. Moreover, by \eqref{eq:decr}, we have that $u$ decreases as far as $u$ remains positive. This fact contributes to arrive to the following result
	\begin{proposition}\label{pr:either}
		A solution of \eqref{cauchy} either is a ground state, or is sign-changing.
	\end{proposition}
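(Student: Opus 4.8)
The plan is to rule out every behaviour intermediate between ``ground state'' and ``sign-changing''. By Proposition~\ref{pr:imp} each $u_\xi$ is defined on all of $\R_+$, so globality is not in question; moreover, since $u_\xi(0)=\xi>0$ and $u_\xi$ is continuous, the only alternative to $u_\xi$ being sign-changing is that $u_\xi\ge 0$ on $[0,+\infty[$. I would therefore assume $u_\xi\ge 0$ throughout and prove that then $u_\xi>0$ everywhere and $u_\xi(r)\to 0$ as $r\to+\infty$, i.e. that $u_\xi$ is a ground state.

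First I would exclude that $u_\xi$ vanishes at a finite point. Suppose $r_0>0$ were the first zero of $u_\xi$. On $]0,r_0[$ we have $u_\xi>0$, so by \eqref{eq:decr} the quantity $\Omega(\rho_\xi(r_0))=\frac{1}{r_0^{N-1}}\int_0^{r_0}s^{N-1}u_\xi(s)^p\,ds$ is strictly positive, whence $u_\xi'(r_0)<0$; but then $u_\xi$ would become negative immediately to the right of $r_0$, contradicting $u_\xi\ge 0$. Hence $u_\xi>0$ on $[0,+\infty[$, and \eqref{eq:decr} then forces $u_\xi'<0$ on $]0,+\infty[$, so $u_\xi$ is strictly decreasing and converges to some limit $\ell\ge 0$ as $r\to+\infty$.

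It remains to show $\ell=0$, and here I would argue by contradiction: if $\ell>0$ then $u_\xi(s)^p\ge\ell^p$ for all $s$, so \eqref{eq:decr} gives $\Omega(\rho_\xi(r))\ge\frac{\ell^p}{N}\,r$, hence $\Omega(\rho_\xi(r))\to+\infty$ and therefore $\rho_\xi(r)\to 1$. This contradicts part~1 of Proposition~\ref{pr:imp}, which provides $a_\xi<1$ with $\rho_\xi\le a_\xi$ on $[0,+\infty[$. Consequently $\ell=0$, $u_\xi$ is positive and vanishes at infinity, i.e. it is a ground state, and the dichotomy is established.

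I do not anticipate a genuine difficulty: the argument is entirely driven by the monotonicity of $u_\xi$ contained in \eqref{eq:decr}. The one point where the preliminary work is essential is the last step, where excluding a strictly positive limit at infinity relies on the a priori bound $\rho_\xi\le a_\xi<1$ — a specifically quasilinear feature, since $\Omega(\rho)$ remains bounded only while $\rho$ stays away from $1$.
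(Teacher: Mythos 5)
Your argument is correct, and its skeleton coincides with the paper's: globality comes from Proposition~\ref{pr:imp}, monotonicity while positive comes from \eqref{eq:decr}, and the whole point is to exclude a strictly positive limit $\ell$ at infinity. Where you genuinely diverge is in the final contradiction. The paper stays with the differential form \eqref{eq:eq}: if $u\to\ell>0$, then using the bound \eqref{eq:boundrho} the equation forces $u''(r)\le-\delta$ for $r$ large, which is incompatible with the boundedness of $u'$. You instead use the integrated identity \eqref{eq:decr} directly: $u^p\ge\ell^p$ gives $\Omega(\rho(r))\ge \ell^p r/N\to+\infty$, i.e. $\rho(r)\to1$, contradicting $\rho\le a_\xi<1$ from part 1 of Proposition~\ref{pr:imp}. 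Your version is slightly more economical (no second derivative, no concavity argument) and makes the quasilinear mechanism explicit, namely that $\Omega$ blows up only as $\rho\to1$; the paper's version has the small advantage of not needing the lower bound $\int_0^r s^{N-1}u^p\ge \ell^p r^N/N$, only the pointwise behaviour of the equation at infinity. You are also more careful than the printed proof on one point: you explicitly rule out a nonnegative solution touching zero at a finite $r_0$ (via $u'(r_0)<0$ from \eqref{eq:decr}), a case the paper passes over silently when it splits into ``positive'' versus ``sign-changing''. Both routes are valid and rest on the same two ingredients, \eqref{eq:decr} and the a priori gradient bound.
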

	\begin{proof}
		From 2. in Proposition \ref{pr:imp}, we already know that every solution of \eqref{cauchy} is global. Now, if $u$ is a positive solution, by \eqref{eq:decr} certainly it converges to some nonnegative value as $r$ goes to infinity. Calling $l$ this value and assuming $l>0$, from equation \eqref{eq:eq} and by \eqref{eq:boundrho}  we would deduce that, definitely, $u''(r)\le -\d$ for some $\d>0.$ This fact would contradict boundedness of $u'$.
	\end{proof}

So for any $u$ solution of \eqref{cauchy}, it is a useful to define by $R_0(u)$ the point where $u$ vanishes the first time, namely
	$$R_0(u):=\inf\{r>0\mid u(r)\le 0\}\in ]0,+\infty].$$

\section{Radial ground states: subcritical and supercritical case}\label{se:grst}

This section is devoted to the proofs of Theorems \ref{th:main1} and \ref{th:main2}.  Before we proceed, we need to underline the fact that the mean curvature operator satisfies the conditions $(1), (2)$ and $(3)$ in \cite{PuSe}, where $\rho$ is meant as an element in $[0,1[$ instead of $[0,+\infty[.$ The different domain of $A$ does not affect any way the results we will take from \cite{PuSe}, since, as already observed, the derivative of any solution of our equation has its absolute value away from $1$.\\
The characterizing feature in our equation is the presence of a nonlinear term $f(u)$ violating condition $(c)$ in \cite{PuSe} where it was required the existence of a number $a>0$ such that $f$ is negative in $]0,a[$ and positive in $]a,+\infty.[$

In the following subsections we first present the already cited Erbe-Tang identity in the general formulation as it appears in \cite{PuSe}, then we treat separately the subcritical and the supercritical case to prove, respectively, nonexistence and multiplicity results.

\subsection{A fundamental identity}

Let us introduce the function $P:]0,+\infty[^2\times [0,1[\to\R$
	\begin{equation}\label{eq:P}
		P(r,u,\rho)= r^N[H(\rho)+F(u)]-Nr^{N-1}\O(\rho)K(u),
	\end{equation}
where $K(u)=F(u)/f(u).$\\
The following identity, displayed in \cite[Proposition 1]{PuSe} (see also \cite{ET}), plays a key role in the proof of Theorems \ref{th:main1} and \ref{th:main2}.
	\begin{lemma}\label{le:identity}
	If $u$ is a solution of \eqref{cauchy}, then, for any $r\in]0,R_0(u)[$,
		\begin{equation*}
			 \frac{dP}{dr}(r,u(r),\rho(r))=Nr^{N-1}\rho(r)\O(\rho(r))\left\{K'(u(r))
-\frac{G(\rho(r))}{\rho(r)\O(\rho(r))}+\frac 1N\right\}.
		\end{equation*}
	\end{lemma}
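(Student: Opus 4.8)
The plan is to obtain the identity by differentiating $P(r,u(r),\rho(r))$ directly along the solution, using the Cauchy equation only in its divergence form to eliminate $\rho'$, and then reorganizing the resulting terms with the help of a few elementary relations between the functions $A,\O,G,H$. Regularity is not an issue: on $]0,R_0(u)[$ the right-hand side of \eqref{eq:decr} is $C^1$, so $r^{N-1}\O(\rho)$ is $C^1$, and since $\O'>0$ the map $\O$ is invertible, whence $\rho$, and therefore $u'=-\rho$, is $C^1$; thus $r\mapsto P(r,u(r),\rho(r))$ is differentiable and the chain rule applies.

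First I would fix the sign conventions on the interval in question. Since $u>0$ on $]0,R_0(u)[$, formula \eqref{eq:decr} gives $u'(r)<0$ there, hence $\rho(r)=-u'(r)$ and $\frac{u'(r)}{\sqrt{1-(u'(r))^2}}=-\O(\rho(r))$. With this, equation \eqref{eq:eq}, rewritten as $\bigl(r^{N-1}\frac{u'}{\sqrt{1-(u')^2}}\bigr)'=-r^{N-1}f(u)$, becomes $\bigl(r^{N-1}\O(\rho)\bigr)'=r^{N-1}f(u)$, i.e.
\begin{equation*}
\O'(\rho)\,\rho'=f(u)-\frac{N-1}{r}\,\O(\rho).
\end{equation*}
This is the only way the ODE enters, and it is important that it delivers exactly the combination $\O'(\rho)\rho'$ (and, since $\O'\neq 0$, the quantity $\rho'$ itself) that appears after differentiating $P$.

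Next I would record the algebraic facts. From $A(\rho)=(1-\rho^2)^{-1/2}$ one reads off $\O(\rho)=\rho A(\rho)$, $A'(\rho)=\rho A(\rho)^3$, $\O'(\rho)=A(\rho)^3$, $H(\rho)=A(\rho)-1$, hence $H'(\rho)=\rho A(\rho)^3=\rho\,\O'(\rho)$ and $G'(\rho)=\O(\rho)$; the crucial identity is
\[
H(\rho)+G(\rho)=\rho\,\O(\rho),
\]
both sides being equal to $\rho^2 A(\rho)$. On the nonlinear side one only needs $K(u)f(u)=F(u)$, immediate from $K=F/f$. Then I would apply the chain rule to $P=r^N[H(\rho)+F(u)]-Nr^{N-1}\O(\rho)K(u)$, which produces seven terms, substitute $u'=-\rho$ in every term containing $u'$, and substitute for $\O'(\rho)\rho'$ (equivalently $\rho'$) from the equation. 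The terms carrying $F(u)$ cancel against the ones generated by $-Nr^{N-1}\O(\rho)K(u)$ once $Kf=F$ is used; the terms carrying $f(u)$ cancel thanks to $u'=-\rho$; and the surviving contributions collapse, via $H'=\rho\,\O'$ and $H+G=\rho\O$, into
\[
Nr^{N-1}\rho(r)\O(\rho(r))\Bigl\{K'(u(r))-\frac{G(\rho(r))}{\rho(r)\O(\rho(r))}+\frac1N\Bigr\},
\]
which is the asserted formula.

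The computation is essentially bookkeeping; the only points requiring care are performing the substitution for $\rho'$ at the right moment and not losing any of the seven terms. The conceptual content — and the reason $2^*$ will later be a threshold — lies in the two identities $H'=\rho\,\O'$ and $H+G=\rho\O$, which are precisely what forces all extraneous terms to vanish and leaves the monotonicity of $P$ governed by the sign of $K'(u)-G(\rho)/(\rho\O(\rho))+1/N$; for a pure power $K'$ is the constant $1/(p+1)$, so this sign is controlled by comparing $1/(p+1)+1/N$ with the range of $G(\rho)/(\rho\O(\rho))$.
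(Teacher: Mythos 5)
Your derivation is correct, and I checked the bookkeeping: on $]0,R_0(u)[$ one has $u>0$, hence by \eqref{eq:decr} $u'<0$, so $\rho=-u'$ and the divergence form of \eqref{eq:eq} gives exactly $\O'(\rho)\rho'=f(u)-\frac{N-1}{r}\O(\rho)$; the seven chain-rule terms of $\frac{d}{dr}P(r,u,\rho)$ then cancel via $K(u)f(u)=F(u)$ and $u'=-\rho$, and the survivors collapse through $H'(\rho)=\rho\,\O'(\rho)$ and $H(\rho)+G(\rho)=\rho\,\O(\rho)$ to $Nr^{N-1}\rho\O(\rho)\bigl\{K'(u)-G(\rho)/(\rho\O(\rho))+1/N\bigr\}$, as claimed; your regularity remark (the right-hand side of \eqref{eq:decr} is $C^1$ and $\O'=A^3>0$, so $\rho$ is $C^1$ and the chain rule applies) is also sound. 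The route, however, is genuinely different from the paper's, which proves nothing here: it imports the identity wholesale from Proposition 1 of \cite{PuSe} (going back to \cite{ET}), after the preliminary remark that the mean curvature operator satisfies conditions $(1)$--$(3)$ of \cite{PuSe} with $\rho$ ranging in $[0,1[$ rather than $[0,+\infty[$, and that the identity there concerns positive nonincreasing solutions of a quasilinear class containing \eqref{eq:eq}. Your direct computation buys self-containedness---you never need to verify the structural hypotheses of \cite{PuSe}, nor to argue that the restricted domain of $A$ and the failure of their condition (c) on $f$ are harmless---while the paper's citation buys brevity and keeps the result inside the general framework it reuses later (e.g.\ in Lemma \ref{le:notwice}). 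Either way the identity, and the subsequent sign analysis via $K'(u)\equiv 1/(p+1)$, is the same.
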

		Previous identity was proved for positive, nonincreasing solutions related to a class of quasilinear equations including ours.\\
Taking into account our specific situation, the identity can be written
	\begin{equation}\label{eq:identity}
		\frac{dP}{dr}(r,u(r),\rho(r))=Nr^{N-1}\rho(r)\O(\rho(r))\left\{\frac 1 {p+1}-\frac{\sqrt{1-\rho^2(r)}}{1+\sqrt{1-\rho^2(r)}}+\frac 1 N \right\},
	\end{equation}
for any $r\in ]0,R_0(u)[$. \\
In particular, we observe that, since
	\begin{equation}\label{eq:est}
		\frac{\sqrt{1-\rho^2}}{1+\sqrt{1-\rho^2}}\le \frac 12,\quad\hbox{for any }\rho\in [0,1[
	\end{equation}
and for any $\eps>0$ there exists $\d>0$ such that
	\begin{equation}\label{eq:est2}
		\frac 12 -\eps\le \frac{\sqrt{1-\rho^2}}{1+\sqrt{1-\rho^2}},\quad\hbox{for any }\rho\in [0,\d],
	\end{equation}	
if we define
	\begin{align}\label{eq:L}
		L(u,\rho)&=K'(u)-\frac{G(\rho)}{\rho\O(\rho)}+\frac 1N\nonumber\\
                 &=\frac 1 {p+1}-\frac{\sqrt{1-\rho^2}}{1+\sqrt{1-\rho^2}}+\frac 1 N ,
	\end{align}
then we have the following result
	\begin{lemma}\label{le:estdpdr}
		The derivative $\frac{dP}{dr}$ and the function $L$ do not depend on $u$. Moreover, if $\rho\neq 0$, they have the same sign and, according to the value of $p>1$, $p\neq 2^*-1$, one or the other of the following possibilities holds:
		\begin{itemize}
			\item either $p < 2^*-1,$ and then $\frac{dP}{dr}(r,u(r),\rho(r))\ge 0$ for any 					 $r>0$,\\
			\item or $p>2^*-1,$ and then there exists $\d>0$ such that 	 $\frac{dP}{dr}(r,u(r),\rho(r))< 0$ in $\{r>0\mid \rho(r)\in ]0,\d]\}$.
		\end{itemize}
	\end{lemma}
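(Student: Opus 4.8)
The claim splits naturally into three assertions: (i) $\frac{dP}{dr}$ and $L$ are independent of $u$; (ii) when $\rho\neq 0$ they share the same sign; and (iii) the dichotomy on the sign of $\frac{dP}{dr}$ according to $p\lessgtr 2^*-1$. Parts (i) and (ii) are essentially read off from formula \eqref{eq:L} combined with Lemma \ref{le:identity}: the plan is to recall that, by the computation of $K(u)=F(u)/f(u)=\frac{1}{p+1}u$ for the pure power nonlinearity, we get $K'(u)\equiv\frac{1}{p+1}$, a constant, and that the term $\frac{G(\rho)}{\rho\O(\rho)}$ simplifies, using $G(\rho)=1-\sqrt{1-\rho^2}$ and $\O(\rho)=\rho/\sqrt{1-\rho^2}$, to $\frac{\sqrt{1-\rho^2}}{1+\sqrt{1-\rho^2}}$. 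Hence $L(u,\rho)$ depends only on $\rho$, and by Lemma \ref{le:identity} the identity $\frac{dP}{dr}=Nr^{N-1}\rho\O(\rho)\,L(u,\rho)$ shows $\frac{dP}{dr}$ is a $u$-independent multiple of $L$; since $Nr^{N-1}\rho\O(\rho)>0$ whenever $r>0$ and $\rho\neq 0$, the two have the same sign. This takes care of (i) and (ii) with only routine algebra.

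For (iii) I would analyze the sign of the scalar function $g(\rho):=\frac{1}{p+1}+\frac1N-\frac{\sqrt{1-\rho^2}}{1+\sqrt{1-\rho^2}}$ on $[0,1[$. The key observations are the two elementary estimates already isolated in the excerpt: $\frac{\sqrt{1-\rho^2}}{1+\sqrt{1-\rho^2}}\le\frac12$ for all $\rho\in[0,1[$ (inequality \eqref{eq:est}), and for every $\eps>0$ there is $\d>0$ with $\frac12-\eps\le\frac{\sqrt{1-\rho^2}}{1+\sqrt{1-\rho^2}}$ for $\rho\in[0,\d]$ (inequality \eqref{eq:est2}). In the subcritical case $p<2^*-1=\frac{N+2}{N-2}$, a direct computation gives $\frac{1}{p+1}+\frac1N>\frac{N-2}{2N}+\frac1N=\frac12$, so $g(\rho)\ge\frac{1}{p+1}+\frac1N-\frac12>0$ for all $\rho\in[0,1[$ by \eqref{eq:est}; hence $\frac{dP}{dr}\ge0$ for all $r>0$ (with equality only where $\rho=0$). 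In the supercritical case $p>2^*-1$, the same computation gives $\frac{1}{p+1}+\frac1N<\frac12$, so setting $\eps:=\frac12-\left(\frac{1}{p+1}+\frac1N\right)>0$ and taking the corresponding $\d$ from \eqref{eq:est2}, we get $g(\rho)\le\frac{1}{p+1}+\frac1N-\left(\frac12-\eps\right)<0$ for $\rho\in]0,\d]$, whence $\frac{dP}{dr}<0$ on $\{r>0\mid\rho(r)\in]0,\d]\}$.

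There is no serious obstacle here; the whole statement is a bookkeeping consequence of Lemma \ref{le:identity} once the explicit form of $K$ and of $G/(\rho\O)$ for the Lorentz--Minkowski operator with pure-power nonlinearity is inserted. The only point requiring a moment of care is verifying the threshold arithmetic $\frac{1}{p+1}+\frac1N\gtrless\frac12\iff p\lessgtr\frac{N+2}{N-2}$, which follows by clearing denominators: $\frac{1}{p+1}+\frac1N=\frac{N+p+1}{N(p+1)}$ and this exceeds $\frac12$ exactly when $2(N+p+1)>N(p+1)$, i.e. $2N+2p+2>Np+N$, i.e. $N+2>p(N-2)$, i.e. $p<\frac{N+2}{N-2}$. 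I would present the argument in that order: first dispatch (i)--(ii) from the simplification of $L$, then treat the two cases of (iii) using \eqref{eq:est} and \eqref{eq:est2} respectively, flagging explicitly the boundary case $\rho=0$ where $\frac{dP}{dr}=0$ regardless of $p$.
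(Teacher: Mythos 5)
Your argument is correct and is essentially the paper's own proof written out in full: the paper also concludes directly from \eqref{eq:identity} and \eqref{eq:L} via \eqref{eq:est} and \eqref{eq:est2}, and your filled-in details ($K'(u)=\tfrac1{p+1}$, $G(\rho)/(\rho\O(\rho))=\sqrt{1-\rho^2}/(1+\sqrt{1-\rho^2})$, and the threshold arithmetic $\tfrac1{p+1}+\tfrac1N\gtrless\tfrac12\iff p\lessgtr 2^*-1$) are exactly what its one-line proof leaves implicit. One small repair in the supercritical case: since \eqref{eq:est2} is a non-strict bound, taking $\eps$ equal to the full gap $\tfrac12-\tfrac1{p+1}-\tfrac1N$ only yields $L(\rho)\le 0$ (with possible equality at $\rho=\d$); choose instead $\eps$ strictly smaller, say half that gap, so that $L(\rho)\le-\eps<0$ for $\rho\in\,]0,\d]$ and the strict inequality $\frac{dP}{dr}<0$ follows as claimed.
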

	\begin{proof}
		Taking into account \eqref{eq:identity} and \eqref{eq:L}, we get our conclusions by \eqref{eq:est} and \eqref{eq:est2}.
	\end{proof}
	
\subsection{Case $1<p<2^*-1$: nonexistence result}

The scheme we will follow for proving Theorem \ref{th:main1} starts assuming by contradiction the existence of a radial ground state $\bar u$ such that $\bar u (0)=\bar\xi$. Then we will show that, at the same time, all solutions of \eqref{cauchy} corresponding to an initial datum $\xi\in ]0,\bar\xi[$ are ground states and there exists $\tilde \xi>0$ such that all solutions of \eqref{cauchy} corresponding to an initial datum $\xi\in]0,\tilde \xi[$ are sign-changing: obviously a contradiction.\\
The following Lemma derives from simple computations
	\begin{lemma}\label{le:comparison}
		If $u$ is a solution of \eqref{eq:eq}, then for any $\l>0$ the function $u_\l(r)=\l^{-\frac 1{2p}}u(\l^{-\frac{p-1}{4p}}r)$ solves
		\begin{equation}\label{eq:eqeps}
			 \left(\frac{w'}{\sqrt{1-\eps(w')^2}}\right)'+\frac{N-1}{r}\frac{w'}{\sqrt{1-\eps(w')^2}}+|w|^{p-1}w=0,
		\end{equation}
		with $\eps=\l^{\frac{p+1}{2p}}.$\\
Conversely, if $w$ is a solution of \eqref{eq:eqeps}, $u_\eps(r)=\eps^{\frac 1{p+1}}w( \eps^\frac{p-1}{2(p+1)} r)$ solves \eqref{eq:eq}.
	\end{lemma}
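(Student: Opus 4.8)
The plan is to verify the two claimed substitutions by direct computation, treating them as a rescaling exercise: the only subtlety is tracking how the square-root denominator transforms when the spatial variable is dilated, since the nonlinear power term scales homogeneously in an obvious way. First I would treat the forward direction. Starting from a solution $u$ of \eqref{eq:eq}, set $u_\l(r)=\l^{-\frac{1}{2p}}u(\l^{-\frac{p-1}{4p}}r)$ and write $s=\l^{-\frac{p-1}{4p}}r$ for the argument of $u$. Then $u_\l'(r)=\l^{-\frac{1}{2p}}\l^{-\frac{p-1}{4p}}u'(s)$, and the key computation is the combined exponent on $u'(s)$: the factor multiplying $u'(s)$ is $\l^{-\frac{1}{2p}-\frac{p-1}{4p}}=\l^{-\frac{p+1}{4p}}$. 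Hence $(u_\l'(r))^2=\l^{-\frac{p+1}{2p}}(u'(s))^2$, which is exactly $\eps^{-1}(u'(s))^2$ with $\eps=\l^{\frac{p+1}{2p}}$ — so $\eps(u_\l'(r))^2=(u'(s))^2$ and the denominator $\sqrt{1-\eps(u_\l'(r))^2}$ becomes $\sqrt{1-(u'(s))^2}$, matching the operator in \eqref{eq:eq} evaluated at $s$.

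Next I would substitute into \eqref{eq:eqeps}: the term $\frac{w'}{\sqrt{1-\eps(w')^2}}$ with $w=u_\l$ equals $\l^{-\frac{p+1}{4p}}\,\frac{u'(s)}{\sqrt{1-(u'(s))^2}}$, and differentiating in $r$ (chain rule, extra factor $\frac{ds}{dr}=\l^{-\frac{p-1}{4p}}$) produces $\l^{-\frac{p+1}{4p}-\frac{p-1}{4p}}\bigl(\frac{u'}{\sqrt{1-(u')^2}}\bigr)'(s)=\l^{-\frac{1}{2}}\bigl(\cdots\bigr)'(s)$; similarly $\frac{N-1}{r}\cdot\l^{-\frac{p+1}{4p}}\frac{u'(s)}{\sqrt{1-(u'(s))^2}}=\l^{-\frac12}\frac{N-1}{s}\frac{u'(s)}{\sqrt{1-(u'(s))^2}}$ since $\frac{1}{r}=\l^{-\frac{p-1}{4p}}\frac{1}{s}$ and the exponents again sum to $-\tfrac12$. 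Finally $|w|^{p-1}w=\l^{-\frac{1}{2p}\cdot p}|u(s)|^{p-1}u(s)=\l^{-\frac12}|u(s)|^{p-1}u(s)$. So every term carries the common factor $\l^{-1/2}$, and dividing it out shows \eqref{eq:eqeps} for $u_\l$ is equivalent to \eqref{eq:eq} for $u$ at the point $s$. The converse direction is the same computation run backwards: given $w$ solving \eqref{eq:eqeps}, setting $u_\eps(r)=\eps^{\frac{1}{p+1}}w(\eps^{\frac{p-1}{2(p+1)}}r)$ and $t=\eps^{\frac{p-1}{2(p+1)}}r$, one checks $u_\eps'(r)=\eps^{\frac{1}{p+1}+\frac{p-1}{2(p+1)}}w'(t)=\eps^{1/2}w'(t)$, so $(u_\eps')^2=\eps(w')^2$ and the Euclidean-type denominator $\sqrt{1-(u_\eps')^2}=\sqrt{1-\eps(w')^2}$ reproduces the operator in \eqref{eq:eqeps}; the remaining terms again all collect a common power of $\eps$ which cancels.

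There is essentially no hard part: the statement is a homogeneity/scaling identity and the whole proof is bookkeeping of exponents. The one place to be careful is checking that the three occurrences of $\l$-powers (from the outer amplitude $\l^{-1/2p}$, from the dilation of $r$, and from iterating these in the second-derivative term) combine to the \emph{same} exponent in all three terms of the ODE — if they did not, no choice of $\eps(\l)$ would work, and it is precisely the relation $\eps=\l^{\frac{p+1}{2p}}$ that makes the denominators match while the overall prefactor $\l^{-1/2}$ divides out uniformly. I would therefore present the proof as: (i) record $u_\l'$ and hence $\eps(u_\l')^2=(u')^2$; (ii) substitute term-by-term into \eqref{eq:eqeps}, extracting the factor $\l^{-1/2}$; (iii) note the converse is identical with $\l$ replaced by the inverse substitution. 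This is short enough that writing it out in full is appropriate rather than merely sketching.
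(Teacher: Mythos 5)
Your computation is correct: the exponent bookkeeping ($u_\l'(r)=\l^{-\frac{p+1}{4p}}u'(s)$ so that $\eps(u_\l')^2=(u')^2$, with the common prefactor $\l^{-1/2}$, and the analogous cancellation with $\eps^{\frac{p}{p+1}}$ in the converse) is exactly the ``simple computation'' the paper invokes without writing out, so your proof matches the intended argument. Nothing is missing; the two substitutions are indeed mutually inverse, as your exponents confirm.
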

	\begin{remark}\label{re:sim}
		The relation between solutions of \eqref{eq:eq} and \eqref{eq:eqeps} was firstly exploited by Peletier and Serrin \cite{PS} to study the problem concerning the existence of a ground state solution for the prescribed mean curvature equation. We wish just to emphasize that the smaller $\eps$ is, the better equation \eqref{eq:eqeps} approximates the ODE radial formulation of \eqref{eq:LE}.
	\end{remark}
	\begin{lemma}\label{le:signch}
		If $1<p<2^*-1$, there exists $\xi_p>0$ such that for any $\xi\in]0,\xi_p]$ the solution of \eqref{cauchy} is sign-changing.
	\end{lemma}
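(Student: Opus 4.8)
\noindent\textit{Sketch of the argument.} The plan is to use the rescaling of Lemma~\ref{le:comparison} to compare, for small $\xi$, the solution $u_\xi$ of \eqref{cauchy} with the radial solution of the Lane--Emden Cauchy problem, which in the subcritical range is known to change sign. Concretely, fix $\xi\in\,]0,1[$, set $\eps=\xi^{p+1}$ and, applying Lemma~\ref{le:comparison} with $\l=\xi^{2p}$, let $w_\xi(r):=\xi^{-1}u_\xi(\xi^{-(p-1)/2}r)$, which solves \eqref{eq:eqeps} with this $\eps$ and satisfies $w_\xi(0)=1$, $w_\xi'(0)=0$; it is global (being a rescaling of the global solution $u_\xi$, see Proposition~\ref{pr:imp}) and $|w_\xi(r)|<1$ for every $r\ge0$, since $|u_\xi|<\xi$ on $]0,+\infty[$. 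As the change of variable is an increasing dilation of the $r$--variable composed with multiplication by a positive constant, $u_\xi$ is sign--changing if and only if $w_\xi$ is, and $\eps\downarrow0$ as $\xi\downarrow0$. Hence it suffices to find $\eps_0>0$ such that $w_\xi$ is sign--changing whenever $\xi^{p+1}\le\eps_0$, and then take $\xi_p=\eps_0^{1/(p+1)}$.

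The next step is to show that $w_\xi\to w_0$ in $C^1_{\mathrm{loc}}([0,+\infty))$ as $\xi\to0$, where $w_0$ is the solution of the radial Cauchy problem for \eqref{eq:LE}, i.e. $w_0''+\frac{N-1}{r}w_0'+|w_0|^{p-1}w_0=0$ with $w_0(0)=1$, $w_0'(0)=0$ (the ``$\eps=0$'' version of \eqref{eq:eqeps}, as anticipated in Remark~\ref{re:sim}). Writing \eqref{eq:eqeps} in integrated form as in \eqref{eq:decr}, $\frac{w_\xi'(r)}{\sqrt{1-\eps(w_\xi')^2}}=-r^{1-N}\int_0^r s^{N-1}|w_\xi|^{p-1}w_\xi\,ds$, and using $|w_\xi|\le1$, one gets $|w_\xi'(r)|\le r/N$ and then $|w_\xi''(r)|\le C$ on $[0,R]$, uniformly for $\eps$ small (in particular $\eps(w_\xi')^2\le1/2$ there, so no degeneracy of the operator occurs). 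Thus $\{w_\xi\}$ is precompact in $C^1([0,R])$; passing to the limit in the integrated equation along any sequence $\xi_k\to0$ identifies each limit with a solution of the Lane--Emden radial Cauchy problem, and uniqueness for the latter (a standard contraction argument, $|w|^{p-1}w$ being locally Lipschitz and the singularity at $r=0$ mild) forces the whole family to converge to $w_0$.

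For $1<p<2^*-1$ the function $w_0$ changes sign: if $w_0(r)>0$ for every $r\ge0$, then $x\mapsto w_0(|x|)$ is a positive classical solution of \eqref{eq:LE} on $\R^N$, contradicting \cite[Theorem~1.1]{GS}; hence $w_0$ has a first zero $R_0<+\infty$. Moreover $w_0$ is strictly decreasing on $]0,R_0[$ by the integrated equation, so $w_0'(R_0)\le0$, and $w_0'(R_0)=0$ is impossible since it would force $w_0\equiv0$ by uniqueness at the regular point $R_0$; therefore $w_0'(R_0)<0$ and $w_0<0$ on some $]R_0,R_0+\delta[$. Fixing $r_*\in\,]R_0,R_0+\delta[$, the uniform convergence $w_\xi\to w_0$ on $[0,r_*]$ yields $w_\xi(r_*)<0$ for $\xi$ small, while $w_\xi(0)=1$; so $w_\xi$ is sign--changing for all sufficiently small $\xi$, which via the rescaling gives the desired $\xi_p>0$.

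The main obstacle is the convergence $w_\xi\to w_0$: the ODEs are singular at the origin, so continuous dependence cannot be quoted off the shelf but must be run through the integrated formulation, and the a priori bounds have to be uniform in $\eps$ and, crucially, keep $\eps(w_\xi')^2$ bounded away from $1$ on compact intervals --- which is exactly what the bound $|w_\xi|\le1$ delivers. If one wished to avoid citing \cite{GS}, the sign change of $w_0$ could instead be derived from the Pohozaev--type function $Q(r)=r^N\big(\tfrac12(w_0')^2+\tfrac{1}{p+1}|w_0|^{p+1}\big)-\tfrac{N-2}{2}r^{N-1}w_0w_0'$, which is strictly increasing wherever $w_0\neq0$ when $p<2^*-1$, together with the decay of a hypothetical positive solution; this is elementary but needs an extra bootstrap on the decay rate and is less economical.
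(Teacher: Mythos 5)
Your proposal is correct and follows essentially the same route as the paper: both use the rescaling of Lemma~\ref{le:comparison} to identify $u_\xi$ (for small $\xi$) with the solution of \eqref{cauchyeps} for $\eps=\xi^{p+1}$, and then deduce the sign change from the convergence, as $\eps\to 0$, to the subcritical Lane--Emden radial solution, which is known to have a first zero crossed transversally. The only differences are matters of detail: you spell out the continuous dependence via a priori bounds and compactness in the integrated formulation (the paper simply invokes that \eqref{eq:eqeps} is a regular perturbation of \eqref{cauchyLE}), and you justify the sign change of the Lane--Emden solution through \cite{GS}, which the paper records as a well-known fact.
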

	\begin{proof}
		Consider $v_1$, the solution to the Cauchy problem
			\begin{equation}\label{cauchyLE}
				\left\{
				\begin{array}{ll}
				v''+\frac{N-1}{r}v'+|v|^{p-1}v=0
				\\
				v'(0) =0,\\
				v(0)=1.
				\end{array}
				\right.
			\end{equation}
        It is well known that $v_1$ vanishes at a certain $\bar R$ and $v_1'(\bar R)<0$. Let $R$ be close to $\bar R$ such that $v_1(R)<0$. We set $\d=-v_1(R)/2$. Since \eqref{eq:eqeps} is a regular perturbation of the Lane-Emden equation (for $\eps=0$ the equations coincide), we can find $\bar \eps>0$ sufficiently small such that for any $\eps\in ]0,\bar\eps]$, the solution $w_\eps$ of the Cauchy problem
        	\begin{equation}\label{cauchyeps}
				\left\{
				\begin{array}{ll}
				 \left(\frac{w'}{\sqrt{1-\eps(w')^2}}\right)'+\frac{N-1}{r}\frac{w'}{\sqrt{1-\eps(w')^2}}+|w|^{p-1}w=0,
				\\
				w'(0) =0,\\
				w(0)=1,
				\end{array}
				\right.
			\end{equation}
is such that $|v_1(r)-w_\eps(r)|<\d$ in $[0,R]$.\\
Of course we deduce that every $w_\eps$ is a sign-changing solution of \eqref{eq:eqeps} which means, by Lemma \ref{le:comparison}, that $u_\eps(r)=\eps^{\frac 1{p+1}}w_\eps( \eps^\frac{p-1}{2(p+1)} r)$ is a sign-changing solution of \eqref{cauchy} with $\xi=\eps^{\frac 1 {p+1}}\in]0,\bar\eps^{\frac 1 {p+1}}]$.
        \end{proof}
We remark that the following result holds independently from the value of $p>1$, and this fact will be fundamental as we later prove Theorem \ref{th:main4} in section \ref{se:signch}.
    \begin{lemma}\label{le:nointersection}
        Assume $\bar u$ is a ground state solution of \eqref{cauchy}. Then, if $u$ is a sign-changing solution of \eqref{cauchy} such that $u(0)<\bar u(0)$, the graphs of $u$ and $\bar u$ intersect somewhere in $[0,R_0(u)]\times ]0,+\infty[$.
    \end{lemma}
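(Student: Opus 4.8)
The plan is to argue by contradiction, assuming that the graphs of $u$ and $\bar u$ never meet on $[0,R_0(u)]\times\,]0,+\infty[$. Since $\bar u$ is a ground state, it stays positive on all of $[0,+\infty[$, and in particular it is positive on $[0,R_0(u)]$. On the other hand $u$ is positive on $[0,R_0(u)[$ and $u(R_0(u))=0$ (by the very definition of $R_0(u)$ together with continuity). So on $[0,R_0(u)[$ both functions are positive, and because the two graphs do not intersect there, by continuity and the initial inequality $u(0)<\bar u(0)$ we must have $u(r)<\bar u(r)$ for every $r\in[0,R_0(u)[$. Letting $r\to R_0(u)$ gives $0=u(R_0(u))\le\bar u(R_0(u))$, which is consistent, so no contradiction yet — the contradiction must come from a finer comparison, and this is where the Erbe--Tang function $P$ enters.

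The key step is to track the Pohozaev-type function $P(r,u(r),\rho(r))$ defined in \eqref{eq:P} along both solutions. At $r=0$ one checks directly from the formula that $P(0,\xi,0)=0$ for every initial datum $\xi$, since the factor $r^N$ and $r^{N-1}$ both vanish; hence $P$ starts at $0$ for both $u$ and $\bar u$. Now I would invoke Lemma \ref{le:estdpdr}: in the subcritical regime $p<2^*-1$ the derivative $\frac{dP}{dr}$ is nonnegative wherever $\rho\neq0$, so $P(r,u(r),\rho(r))\ge0$ for all $r\in\,]0,R_0(u)[$ along the sign-changing solution $u$; similarly $P(r,\bar u(r),\bar\rho(r))\ge 0$ along $\bar u$. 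Evaluating $P$ of $u$ at $r=R_0(u)$, where $u=0$ and hence $F(u)=0$ and $K(u)=0$ (note $K(u)=F(u)/f(u)=\frac{1}{p+1}u$ for the pure power, which vanishes at $u=0$), we get $P(R_0(u),0,\rho(R_0(u)))=R_0(u)^N H(\rho(R_0(u)))\ge 0$, again no immediate contradiction. The real mechanism, as in the classical uniqueness arguments of Peletier--Serrin and Pucci--Serrin, is a \emph{comparison} of the two $P$-values exploiting $u<\bar u$ pointwise: one compares $P(R_0(u),u(R_0(u)),\rho(R_0(u)))$ against $P(R_0(u),\bar u(R_0(u)),\bar\rho(R_0(u)))$ using the monotonicity of $P$ in its second argument and a Sturm-type comparison of $\rho$ with $\bar\rho$ on $[0,R_0(u)]$ derived from \eqref{eq:decr}, and derives a sign contradiction from the strict inequality $u<\bar u$ together with the sign information on $\frac{dP}{dr}$.

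Concretely, the steps in order would be: (i) record $u(r)<\bar u(r)$ on $[0,R_0(u)[$ under the no-intersection hypothesis, and $u(R_0(u))=0$; (ii) from the integral identity \eqref{eq:decr} applied to both solutions, and from $u<\bar u$, deduce an ordering such as $r^{N-1}\O(\rho(r))\le r^{N-1}\O(\bar\rho(r))$ on $[0,R_0(u)]$, equivalently $\rho(r)\le\bar\rho(r)$ there; (iii) compute $P$ of $u$ at the endpoint $R_0(u)$ and $P$ of $\bar u$ at the same point, each bounded below by $0$ via Lemma \ref{le:estdpdr}; (iv) subtract, and use the explicit form $K(u)=u/(p+1)$ together with monotonicity of $H$ and $\O$ and the ordering from (ii) to show the difference has a sign forced to be both $\ge 0$ and $<0$ — contradiction — so the graphs must cross. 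The main obstacle I expect is step (ii): getting a clean pointwise comparison between $\rho$ and $\bar\rho$ (or between the weighted quantities $r^{N-1}\O(\rho)$) from \eqref{eq:decr} is delicate because the integrand $s^{N-1}|u|^{p-1}u$ need not be monotone in the solution globally once signs are involved, so one has to localize carefully to $[0,R_0(u)]$ where both solutions are positive and the ordering $u<\bar u$ does give $|u|^{p-1}u<|\bar u|^{p-1}\bar u$ pointwise, and then integrate. Everything else is a bookkeeping exercise with the identity \eqref{eq:identity} and the elementary estimate \eqref{eq:est}.
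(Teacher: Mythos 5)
There is a genuine gap: the step that is supposed to produce the contradiction (your step (iv)) is never actually carried out, and the ingredients you name for it do not work. After your steps (i)--(iii) you only know that, in the subcritical range, $P(\cdot,u,\rho)$ and $P(\cdot,\bar u,\bar\rho)$ both start at $0$, are both nondecreasing, and are both nonnegative at $r=R_0(u)$ --- there is no tension whatsoever between these facts, and you admit as much twice ("no contradiction yet", "no immediate contradiction"). The "real mechanism" you then invoke is only sketched: monotonicity of $P$ in its second argument is false in general, since $\partial_u P(r,u,\rho)=r^N|u|^{p-1}u-\frac{N}{p+1}r^{N-1}\O(\rho)$ has no definite sign, and the pointwise ordering $\rho\le\bar\rho$ on $[0,R_0(u)]$ obtained from \eqref{eq:decr} (which is correct, and is the easy part) does not by itself force any sign on the difference of the two $P$-values. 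Moreover, your argument is pinned to the subcritical sign of $\frac{dP}{dr}$ from Lemma \ref{le:estdpdr}, whereas the lemma is stated, and needed, for every $p>1$: it is used again in the supercritical case in the proof of Theorem \ref{th:signch}, where the only sign information available is $\frac{dP}{dr}<0$ on the set where $\rho\le\d$, so the proposed route would not cover the cases in which the lemma is actually applied. In short, the $P$-function is the right tool for the "at most one intersection" statement (Lemma \ref{le:notwice}, via Pucci--Serrin), but it is not the mechanism behind the present lemma.

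The paper's proof is entirely different and does not use $P$ at all. Under the no-intersection hypothesis both $u$ (on $[0,R_0(u)]$) and $\bar u$ (on $\R_+$) are decreasing, so one can invert them and study the difference $\bar r(u)-r(u)$ of the inverse functions on $]0,\xi[$, where $\xi=u(0)$. The endpoint asymptotics of the slopes do the work: near $u=0$ one has $u'(R_0(u))<0$ while $\bar u'(r)\to 0^-$ at infinity, so $r'(u)$ stays bounded while $\bar r'(u)\to-\infty$, forcing $(\bar r-r)'<0$ there; near $u=\xi$ the roles reverse ($u'(0)=0$ while $\bar u'(\bar r(\xi))<0$), forcing $(\bar r-r)'>0$. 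Hence $\bar r-r$ has an interior local minimum; but since it is positive under the no-intersection assumption, the Franchi--Lanconelli--Serrin argument (\cite[Lemma 3.3.1]{FLS}) allows it at most one critical point, which must be a maximum --- the desired contradiction, valid for every $p>1$. If you want to salvage your plan, you would have to replace step (iv) by an argument of this inverse-function type rather than a comparison of $P$-values.
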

    \begin{proof}
        Assume by contradiction that $u$ is a solution as in the statement and the set of points in $[0,R_0(u)]\times ]0,+\infty[$ where the graphs of $u$ and $\bar u$ intersect is empty.\\
        Set $\bar \xi=\bar u(0)$ and $\xi=u(0)$. Since $\bar u$ and $u$ are decreasing respectively in in $[0,R_0(u)]$ and in $\R^+$, we can define the functions $\bar r(u)$ and $r(u)$ inverse respectively of $\bar u$ and $u$, the first defined into $]0,\bar\xi]$, the second into $[0,\xi]$. Observe that, since $u'(R_0(u))<0$ and $\lim_{r\to+\infty}\bar u'(r)=0^-$ (the proof of this latter is the same as in \cite[pg. 146]{BLP}), then
            \begin{align*}
                \lim_{u\to 0^+}r'(u)&=\lim_{u\to 0^+}\frac{1}{u'(r(u))}=\frac 1{u'(R_0(u))}>-\infty,\\
                \lim_{u\to 0^+}\bar r'(u)&=\lim_{u\to 0^+}\frac{1}{\bar u'(\bar r(u))}=-\infty.
            \end{align*}
        Then there exists $\eta>0$ such that, if $u\in]0,\eta]$, we have $(\bar r-r)'(u)<0$.\\
        On the other hand, since $\bar u'(\bar r(\xi))<0$ and $\lim_{u\to\xi}u'(r(u))=0^-$ we have
            \begin{align*}
                \lim_{u\to \xi^-}\bar r'(u)&=\lim_{u\to \xi^-}\frac{1}{\bar u'(\bar r(u))}=\frac{1}{\bar u'(\bar r(\xi))}>-\infty,\\
                \lim_{u\to \xi^-} r'(u)&=\lim_{u\to \xi^-}\frac{1}{ u'( r(u))}=-\infty,
            \end{align*}
        and then, if $\eta>0$ is sufficiently small, we have $(\bar r-r)'(u)>0$ for $u\in[\xi-\eta,\xi[$.\\
        We deduce that, necessarily, the function $\bar r - r$ has a local minimum in the interval $]0,\xi[$. On the other hand, our contradiction assumption implies that $\bar r - r$ is positive and then, by repeating the same arguments as those in the proof of
        \cite[Lemma 3.3.1]{FLS}, it is allowed to possess at most one critical point which must be a maximum.
    \end{proof}
    \begin{lemma}\label{le:notwice}
        Let $1<p<2^*-1$ and assume $\bar u$ is a ground state solution of \eqref{cauchy}. Then, if $u$ is a sign-changing solution of \eqref{cauchy}, the graphs of $u$ and $\bar u$ can intersect in at most one point in $[0,R_0(u)]\times ]0,+\infty[$.
    \end{lemma}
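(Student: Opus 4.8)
The plan is to argue by contradiction using the Erbe--Tang/Pucci--Serrin identity (Lemma \ref{le:identity}), exactly in the spirit of classical uniqueness arguments via the $P$-function. Suppose $u$ is a sign-changing solution of \eqref{cauchy} whose graph intersects the graph of $\bar u$ at two points in $[0,R_0(u)]\times\,]0,+\infty[$. Since $\bar u$ is a ground state it is positive and strictly decreasing on $[0,+\infty[$, and $u$ is strictly decreasing on $[0,R_0(u)]$; hence both are invertible there and the difference of the inverse functions $\bar r - r$ is well defined on the relevant $u$-range. Two intersection points of the graphs translate into two zeros of $\bar r - r$, hence (by Rolle) at least one interior critical point of $\bar r-r$ where it vanishes or, more to the point, two sign changes forcing a local maximum that is non-positive sandwiched appropriately. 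I would instead phrase this directly in terms of the function $P$ evaluated along each solution.

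The key steps, in order: (1) Recall from Lemma \ref{le:estdpdr} that in the subcritical case $p<2^*-1$ we have $\tfrac{dP}{dr}(r,u(r),\rho(r))\ge 0$ along \emph{any} solution of \eqref{cauchy}, as long as $\rho(r)\neq 0$; combined with $P(0,\cdot,0)=0$ and the behaviour of $P$ this gives monotonicity-type information on $P$ along both $u$ and $\bar u$. (2) At an intersection point $r_0$ we have $u(r_0)=\bar u(r_0)$; I want to compare $\rho_u(r_0)=|u'(r_0)|$ with $\rho_{\bar u}(r_0)=|\bar u'(r_0)|$ using \eqref{eq:decr}, i.e. the fact that $\O(\rho(r))=r^{1-N}\int_0^r s^{N-1}|u|^{p-1}u\,ds$, to deduce that the solution started from the smaller initial datum has, at the common height, the smaller slope in absolute value (this is where "$u(0)<\bar u(0)$" from the previous Lemma, or the general positivity of $P$, enters). (3) Then, evaluating $P$ at the two intersection points and using that $\O$ is increasing in $\rho$ while $H$ is increasing in $\rho$, I would derive that $P(r,u(r),\rho_u(r)) - P(r,\bar u(r),\rho_{\bar u}(r))$ has a forced sign at each intersection point that is incompatible with the monotonicity of each $P$ from step (1). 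Concretely, between two intersections one runs into the same obstruction as in \cite[Lemma 3.3.1]{FLS}: the relevant comparison function is positive yet is forced to have an interior local minimum, contradicting that it can have at most one critical point, necessarily a maximum.

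The main obstacle I expect is step (2)--(3): making the comparison of the two $P$-values at an intersection point rigorous. One must be careful that $\bar u$ is only known to be decreasing and vanishing at infinity (not compactly supported), so the endpoint analysis of $\bar r - r$ as $u\to 0^+$ — already carried out in the proof of Lemma \ref{le:nointersection} using $\bar u'(r)\to 0^-$ versus $u'(R_0(u))<0$ — must be invoked to control the behaviour of the comparison function near the lower end of the $u$-range, while the identity \eqref{eq:identity} with its definite sign controls the interior. The subcriticality $p<2^*-1$ is used \emph{only} through the sign of $\tfrac{dP}{dr}$ in Lemma \ref{le:estdpdr}; everything else is the FLS-type "at most one critical point, and it is a maximum" argument applied to $\bar r - r$ on $]0,\xi[$, so once the endpoint slopes are pinned down as in Lemma \ref{le:nointersection} the two-intersection scenario is excluded just as a single intersection was handled there.
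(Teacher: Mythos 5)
Your toolbox is the right one (the paper itself settles this lemma by invoking, essentially verbatim, Part II of the proof of Theorem 1 in \cite{PuSe}, i.e. the monotone-separation argument built on the function $P$ of \eqref{eq:P} and on the sign information of Lemma \ref{le:estdpdr}), but your plan has a genuine gap exactly at its decisive point. What actually excludes a second crossing in \cite{PuSe} is the content of your steps (2)--(3), which you do not carry out and yourself flag as ``the main obstacle''. In particular, the naive comparison you propose does not close: at a crossing radius $r_1$, where $u(r_1)=\bar u(r_1)$, one has
\begin{equation*}
P\big(r_1,u(r_1),\rho_u(r_1)\big)-P\big(r_1,\bar u(r_1),\rho_{\bar u}(r_1)\big)
= r_1^N\big[H(\rho_u)-H(\rho_{\bar u})\big]-Nr_1^{N-1}K(u(r_1))\big[\Omega(\rho_u)-\Omega(\rho_{\bar u})\big],
\end{equation*}
and since $H$ and $\Omega$ are both increasing in $\rho$ the two terms carry opposite signs, so no sign is ``forced'' at a crossing without substantially more work; this is precisely why the paper defers to the Pucci--Serrin argument rather than to a one-line evaluation of $P$. (The paper's only added remark is that the argument of \cite{PuSe}, stated there for two positive ground states, only uses the region $[0,R_0(u)]$ where $u>0$, so it survives when one of the two solutions changes sign afterwards; your plan does not address this point either, though it is minor compared with the missing comparison step.)

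The fallback in your last paragraph is not a repair but an error. The FLS-type statement used in Lemma \ref{le:nointersection} is local: at a critical point of $\bar r-r$ lying where $\bar r-r>0$ one gets a strict maximum (and, symmetrically, a strict minimum where $\bar r-r<0$). Two transversal crossings are perfectly compatible with this: $\bar r-r$ vanishes at the two crossing heights, is (say) negative in between with one interior minimum there, and positive outside --- no contradiction arises. Moreover, the endpoint slope analysis of Lemma \ref{le:nointersection} concerns the limits $u\to 0^+$ and $u\to\xi^-$, i.e. the ends of the $u$-range, and gives no information on the interval between two interior crossings. Finally, that route would never use $p<2^*-1$, whereas subcriticality is essential and enters (through Lemma \ref{le:estdpdr}) inside the Pucci--Serrin argument; your simultaneous claims that subcriticality is used ``only through the sign of $dP/dr$'' and that the conclusion follows from the FLS argument alone are mutually inconsistent, and that inconsistency is precisely where the proof is missing.
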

    \begin{proof}
        We deduce the conclusion applying exactly the same arguments used in
        \cite[Proof of Theorem 1. Part II]{PuSe}. We just make the reader note that, even if in \cite{PuSe} the authors prove the statement for two ground states (and then both everywhere positive), their proof works fine alike if we assume that the solutions graphs intersect twice before the sign-changing solution graph touches the axis.
    \end{proof}
    Now we are ready for the following\\
    \\
    \begin{proofmain}
        Suppose by contradiction that there exists a radial solution $\bar u$ to \eqref{eq:mean} and set, with abuse of notation, $\bar u(r)=\bar u(x)$ for $|x|=r.$\\
        By Lemma \ref{le:nointersection} and Lemma \ref{le:notwice}, all solutions of \eqref{cauchy} corresponding to $\xi\in]0,\bar u(0)[$ are ground states since, otherwise, we would find a sign-changing solution violating one of the two previous lemmas.\\
        On the other hand we can find infinitely many sign-changing solutions of \eqref{cauchy} corresponding to $\xi\in ]0,\bar u(0)[$ by Lemma \ref{le:signch}.

        Of course this is a contradiction deriving from having supposed the existence of a radial ground state solution.
    \end{proofmain}

\subsection{Case $p>2^*-1$: multiplicity result}

    Since for $p$ supercritical we know that all radial solutions of \eqref{eq:LE} are ground states, the idea that multiple ground state solutions could exist for \eqref{eq:mean} arises from the fact already noted in the proof of Lemma \ref{le:signch} that, for small values of $\xi>0$, solutions of \eqref{cauchy} correspond to rescaled solutions of good approximations of problem \eqref{cauchyLE}. Of course this observation alone is not sufficient to guarantee what claimed in Theorem \ref{th:main2} since, no matter how small $\eps>0$ is, the solution coming from \eqref{cauchyeps} could, sooner or later, vanish at some $R>0$. Our proof will be based on a contradiction argument.
    \\
    \\
    \begin{proofmain2}
        Since $p>2^*-1$, by Lemma \ref{le:estdpdr} there exists $\d >0$ such that, taken any $u$ solution of \eqref{cauchy},
            \begin{equation}\label{eq:dpdr2}
                \frac{dP}{dr}(r,u(r),\rho(r))< 0 \hbox{ in }\{r>0\mid \rho(r)\in [0,\d]\}.
            \end{equation}
        Choose $\bar\xi>0$ such that, taking into account 3. of Proposition \ref{pr:imp}, it is small enough to have
            \begin{equation}\label{eq:supsup}
                \sup_{\xi\in]0,\bar\xi]}\sup_{r>0}\rho_{\xi}(r)<\d.
            \end{equation}

We claim that $(u_\xi)_{\xi\in]0,\bar\xi]}$ is a family of ground states. Suppose by contradiction that $\tilde \xi\in ]0,\bar\xi]$ is such that $u_{\tilde \xi}$ is a sign-changing solution. Then, assuming the notation $\tilde R=R(u_{\tilde\xi})$, from \eqref{eq:P} we have
	\begin{equation}\label{eq:firterm}
		P\big(0, u_{\tilde\xi}(0), \rho_{\tilde\xi} (0)\big)=0
	\end{equation}
and, since $u_{\tilde\xi}(\tilde R)=0$,
	\begin{align}\label{eq:secterm}
		&P\big(\tilde R, u_{\tilde\xi}(\tilde R), \rho_{\tilde\xi} (\tilde R)\big)\nonumber\\
		&\quad=\tilde R^N[H(\rho_{\tilde\xi}(\tilde R))+F(u_{\tilde\xi}(\tilde R))]-N\tilde R^{N-1}\O( \rho_{\tilde\xi} (\tilde R))K(u_{\tilde\xi}(\tilde R))\nonumber\\
&\quad=\tilde R^NH(\rho_{\tilde\xi}(\tilde R))\ge 0.
	\end{align}
Then, by \eqref{eq:dpdr2} , \eqref{eq:supsup},  \eqref{eq:firterm} and \eqref{eq:secterm} we achieve the following contradiction
            \begin{align*}
		0&\le P\big(\tilde R, u_{\tilde\xi}(\tilde R), \rho_{\tilde\xi} (\tilde R)\big)-P\big(0, u_{\tilde\xi}(0), \rho_{\tilde\xi} (0)\big)\\
&=\int_0^{\tilde R} \frac{dP}{dr}\big(r,u_{\tilde\xi}(r),\rho_{\tilde\xi}(r)\big)\, dr<0.
            \end{align*}
    \end{proofmain2}

\section{Asymptotic behaviour of radial ground states solutions}\label{se:dec}

In this section we want to analyze the decaying law at infinity of radial solutions of \eqref{eq:mean}. Of course, by Theorems \ref{th:main1} and \ref{th:main2}, all the contents are related with the supercritical case $p> 2^*-1$, so we do not repeat anymore this fact in the sequel.\\
It is our specific aim to compare solutions whose existence is proved by Theorem \ref{th:main2} with that in \cite[Theorem 1.1]{BDD}.

In what follows we characterize the solutions in terms of the integrability of the function $r^{N-1}\rho(r)\O(\rho(r))L(\rho(r))$.

	\begin{lemma}\label{le:Lp+1}
		If $u$ is a radial solution of \eqref{eq:mean} and $u\in L^{p+1}(\RN)$ then
			$$\int_0^{+\infty}r^{N-1}\rho(r)\O(\rho(r))L(\rho(r))\,dr=0.$$
	\end{lemma}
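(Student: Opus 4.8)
The strategy is to integrate the Erbe--Tang identity \eqref{eq:identity} and identify the value of the integral as a boundary term. Since $u$ is a radial ground state, $R_0(u)=+\infty$, so \eqref{eq:identity} holds for every $r>0$ and, integrating on $]0,R[$,
\[
N\int_0^{R}r^{N-1}\rho(r)\O(\rho(r))L(\rho(r))\,dr=P\big(R,u(R),\rho(R)\big)-\lim_{r\to0^+}P\big(r,u(r),\rho(r)\big).
\]
By \eqref{eq:P} the boundary value at $0$ is $0$ (the prefactors $r^{N}$, $r^{N-1}$ vanish, and so does $\O(\rho(r))$). Hence the lemma reduces to showing that $P(R,u(R),\rho(R))\to0$ as $R\to+\infty$; in particular the improper integral is then automatically convergent with value $0$.

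For $u>0$ one has $F(u)=u^{p+1}/(p+1)$ and $K(u)=u/(p+1)$, so we must show that each of the three summands of
\[
P\big(r,u(r),\rho(r)\big)=r^{N}H(\rho(r))+\frac{1}{p+1}r^{N}u(r)^{p+1}-\frac{N}{p+1}r^{N-1}\O(\rho(r))u(r)
\]
tends to $0$. The only new ingredient is the hypothesis $u\in L^{p+1}(\RN)$, i.e. $\int_0^{+\infty}r^{N-1}u(r)^{p+1}\,dr<+\infty$. Since $u$ is positive and decreasing, bounding $\int_{r}^{2r}s^{N-1}u(s)^{p+1}\,ds\ge u(2r)^{p+1}\int_{r}^{2r}s^{N-1}\,ds$ and using that the tail of a convergent integral vanishes gives $r^{N}u(r)^{p+1}\to0$, that is $u(r)=o(r^{-N/(p+1)})$. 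Plugging this into \eqref{eq:decr} (read in absolute value) and setting $I(r):=r^{N-1}\O(\rho(r))=\int_0^{r}s^{N-1}u(s)^{p}\,ds$, the integrand of $I$ is $o(s^{(N-p-1)/(p+1)})$ with exponent $>-1$, whence $I(r)=o(r^{N/(p+1)})$.

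With these two decay rates the three summands are handled simultaneously: the middle one is $\tfrac{1}{p+1}r^{N}u(r)^{p+1}\to0$; the last one equals $\tfrac{N}{p+1}I(r)u(r)$ with $I(r)=o(r^{N/(p+1)})$ and $u(r)=o(r^{-N/(p+1)})$, hence $\to0$; and for the first one, multiplying numerator and denominator in $H(\rho)=(1-\sqrt{1-\rho^{2}})/\sqrt{1-\rho^{2}}$ by $1+\sqrt{1-\rho^{2}}$ and using \eqref{eq:est} yields $H(\rho)\le\tfrac12\O(\rho)^{2}$, so that
\[
r^{N}H(\rho(r))\le\tfrac12 r^{N}\O(\rho(r))^{2}=\tfrac12\,r^{2-N}I(r)^{2}=o\big(r^{2-N+2N/(p+1)}\big),
\]
and the exponent $2-N+2N/(p+1)$ is negative precisely when $(N-2)p>N+2$, i.e. $p>2^*-1$. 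Therefore $P(R,u(R),\rho(R))\to0$ and the lemma follows.

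The one genuinely delicate estimate is the last display: one needs the decay of $\rho$ (equivalently of $|\n u|$) to be strong enough for $r^{N}H(\rho(r))$ to vanish at infinity, and the borderline is exactly $p=2^*-1$; everything else is soft and uses only $u\in L^{p+1}(\RN)$, the monotonicity of $u$, and \eqref{eq:decr}. An equivalent and perhaps more transparent route is to eliminate $H(\rho(r))$ from $P$ using the first identity \eqref{eq:fstid}: once one notes (again from $u(r)=o(r^{-N/(p+1)})$) that $\rho(r)\to0$, so that $(N-1)\int_0^{+\infty}\tfrac{\rho(s)\O(\rho(s))}{s}\,ds=F(\xi)$, the two $u^{p+1}$-contributions cancel and $P(r,u(r),\rho(r))=(N-1)r^{N}\int_{r}^{+\infty}\tfrac{\rho(s)\O(\rho(s))}{s}\,ds-\tfrac{N}{p+1}r^{N-1}\O(\rho(r))u(r)$, which is estimated in exactly the same way.
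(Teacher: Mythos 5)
Your proposal is correct and takes essentially the same route as the paper: integrate the Erbe--Tang/Pucci--Serrin identity and show that the boundary term $P(r,u(r),\rho(r))$ vanishes as $r\to+\infty$, using $u\in L^{p+1}(\RN)$ together with \eqref{eq:decr} to obtain the decay of $u$ and $\rho$. The only (inessential) difference is how that decay is derived --- the paper uses H\"older's inequality in \eqref{eq:decr} to get $\rho(r)\le c\,r^{-(N-1)+N/(p+1)}$ and then $u(r)\le c\,r^{-(N-2)+N/(p+1)}$, whereas you use the monotonicity/tail argument --- and both yield the same exponents, with the supercritical condition $p>2^*-1$ entering exactly where you flag it (the paper invokes it through the section-wide convention).
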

	\begin{proof}
		By \eqref{eq:decr},
			\begin{align}\label{eq:decu'}
				\rho(r)&\le\frac 1 {r^{N-1}}\int_0^rs^{N-1}u^p(s)\,ds\nonumber\\
					&\le \frac 1 {r^{N-1}}\left(\int_0^r   s^{N-1}u^{p+1}(s)\,ds  \right)^\frac{p}{p+1}\left(\int_0^rs^{N-1}\,ds\right)^\frac{1}{p+1}\nonumber\\
					&\le c\frac{r^\frac{N}{p+1}}{r^{N-1}}=\frac c{r^{N-1-\frac{N}{p+1}}},
			\end{align}
	where the constant $c>0$ depends on $\|u\|_{L^{p+1}}$.\\
	Since $\lim_{r\to+\infty} u(r)=0$, we have
		\begin{equation}\label{eq:decest}
			u(r)\le\int_r^{+\infty}\rho(r)\,ds\le \frac c{r^{N-2-\frac{N}{p+1}}}.
		\end{equation}	
	Then, by 1. of Proposition \ref{pr:imp}, \eqref{eq:decu'} and \eqref{eq:decest},
		\begin{align*}
			&r^N H(\rho(r))\le c r^{N}\rho^2(r)\le c r^{-N+2+\frac{2N}{p+1}}\to 0,\\
			&r^N F(u(r))\le cr^N u^{p+1}(r)\le c r^{2N-(N-2)(p+1)}\to 0,\\
			&r^{N-1}\O(\rho(r))K(u(r))\le c r^{N-1} \rho(r) u(r)\le c r^{\frac{2N}{p+1}-N+2}\to 0,
		\end{align*}
 	as $r$ goes to $+\infty.$
	At this point we refer to \eqref{eq:P}, \eqref{eq:identity} and \eqref{eq:L} to conclude as follows
		\begin{multline*}
			\int_0^{+\infty}Nr^{N-1}\rho(r)\O(\rho(r))L(\rho(r))\, dr \\
			= \lim_{r\to+\infty}r^N[H(\rho(r))+F(u(r))]-Nr^{N-1}\O(\rho(r))K(u(r))=0.
		\end{multline*}
	\end{proof}
	\begin{lemma}\label{le:integ}
		Let $u$ be a radial solution of \eqref{eq:mean}. Then the following statements are equivalent:
			\begin{itemize}
				\item[$a)$] $\int_0^{+\infty}r^{N-1}\rho(r)\O(\rho(r))L(\rho(r))\,dr\in\R$,
				\item[$b)$] $\int_0^{+\infty}r^{N-1}\rho(r)\O(\rho(r))L(\rho(r))\,dr=0$,
				\item[$c)$] $u\in\D.$
			\end{itemize}
	\end{lemma}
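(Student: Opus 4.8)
\textbf{Proof proposal for Lemma \ref{le:integ}.}

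The plan is to prove the chain of implications $c)\Rightarrow b)\Rightarrow a)$ trivially, and then close the cycle with $a)\Rightarrow c)$, which is the substantive step. The implication $c)\Rightarrow b)$ is precisely Lemma \ref{le:Lp+1}, since $u\in\D$ together with the decay already available (and the Sobolev embedding $\D\hookrightarrow L^{2^*}$, refined by the pointwise bound coming from \eqref{eq:decr}) forces $u\in L^{p+1}(\RN)$ — here one should check that membership in $\D$ plus the a priori bound $\rho_\xi\le a_\xi<1$ from Proposition \ref{pr:imp} upgrades the integrability of $u$ to the exponent $p+1$; this is the kind of bootstrap that is routine but must be stated. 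The implication $b)\Rightarrow a)$ is immediate. So the whole content is $a)\Rightarrow c)$.

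For $a)\Rightarrow c)$ I would argue by contraposition, or directly, as follows. Suppose the integral $\int_0^{+\infty}r^{N-1}\rho\,\O(\rho)L(\rho)\,dr$ converges (to a finite value, a priori not necessarily zero). Recall from \eqref{eq:P}, \eqref{eq:identity} and \eqref{eq:L} that
\begin{equation*}
P(r,u(r),\rho(r))=\int_0^r Ns^{N-1}\rho(s)\O(\rho(s))L(\rho(s))\,ds,
\end{equation*}
so convergence of the integral says exactly that $P(r,u(r),\rho(r))$ has a finite limit $\ell$ as $r\to+\infty$. Now expand $P$ via its definition: $P=r^N[H(\rho)+F(u)]-Nr^{N-1}\O(\rho)K(u)$, with $K(u)=F(u)/f(u)=u/(p+1)$ here. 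The strategy is to show that the finiteness of $\lim_{r\to\infty}P$ is incompatible with $u\notin\D$, by establishing that if $u\notin\D$ then the leading term $r^N H(\rho(r))\sim \tfrac12 r^N\rho^2(r)$ blows up. Concretely: from \eqref{eq:decr} one has $\O(\rho(r))=r^{1-N}\int_0^r s^{N-1}u^p(s)\,ds$; if $u\notin\D$ one expects $u$ to decay no faster than the slow rate $r^{-2/(p-1)}$ flagged in Theorem \ref{th:main3}, which makes $\int_0^r s^{N-1}u^p\,ds$ grow like $r^{N-2p/(p-1)}$, hence $\rho(r)\gtrsim r^{-2/(p-1)}$ and $r^N\rho^2(r)\gtrsim r^{N-4/(p-1)}$; since $p>2^*-1=\tfrac{N+2}{N-2}$ is equivalent to $N-4/(p-1)>0$, this term diverges, while the remaining terms $r^N F(u)\sim c\,r^N u^{p+1}$ and $r^{N-1}\O(\rho)K(u)\sim c\,r^{N-1}\rho u$ are of strictly lower order, giving $P(r,u(r),\rho(r))\to+\infty$, a contradiction. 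Thus $u\in\D$, and then Lemma \ref{le:Lp+1} (via $c)\Rightarrow b)$) pins the value of the integral at $0$, which also secures $a)\Rightarrow b)$.

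The main obstacle is the quantitative lower bound on the decay of $u$ (equivalently of $\rho$) in the case $u\notin\D$: one must turn the qualitative statement ``$u\notin\D$'' into an honest inequality $\rho(r)\ge c\,r^{-2/(p-1)}$ (or at least $\liminf r^{2/(p-1)}\rho(r)>0$) valid for large $r$. I would obtain this by a barrier/comparison argument on \eqref{eq:decr}: if $u$ decayed faster than $r^{-2/(p-1)}$ along a sequence, feed that into the integral representation of $\O(\rho)$ to get a faster decay of $\rho$, integrate to get faster decay of $u$, and iterate to force $u\in L^{p+1}$ and hence (by elliptic/ODE regularity and the bound on $\rho$) $u\in\D$ — i.e. the failure of the lower bound already yields $u\in\D$, so the dichotomy is genuinely between ``$u\in\D$'' and ``$\rho(r)\gtrsim r^{-2/(p-1)}$''. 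Assembling this self-improvement cleanly, keeping track of the exponent arithmetic so that the critical threshold $p=2^*-1$ appears exactly where expected, is the delicate part; everything else is bookkeeping with \eqref{eq:P} and Proposition \ref{pr:imp}.
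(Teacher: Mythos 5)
Your implications $b)\Rightarrow a)$ and $c)\Rightarrow b)$ are fine (for the latter, the paper's route is simply $u\in\D\hookrightarrow L^{2^*}(\RN)$ plus $u\in L^\infty$ and $p+1>2^*$, then Lemma \ref{le:Lp+1}; no bootstrap with $\rho\le a_\xi$ is needed). The problem is the substantive step $a)\Rightarrow c)$. First, your argument hinges on the quantitative statement ``$u\notin\D$ implies $u(r)\ge c\,r^{-2/(p-1)}$'', which you only sketch as an iteration and acknowledge as the delicate part; in the paper this kind of lower decay estimate is Theorem \ref{th:dec2}, which is proved \emph{after} and \emph{by means of} the present lemma (through Theorem \ref{th:alternative}), so your route is circular unless you supply an independent proof. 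Second, even granting that bound, the contradiction you aim for does not materialize: from \eqref{eq:decr} the bound $u\ge c\,s^{-2/(p-1)}$ gives $\rho(r)\gtrsim r^{-2p/(p-1)}=r^{-2/(p-1)-2}$ (not $r^{-2/(p-1)}$ as you wrote), and with the natural rates $u\sim r^{-2/(p-1)}$, $\rho\sim r^{-2/(p-1)-1}$ \emph{all three} terms $r^NH(\rho)\sim\tfrac12 r^N\rho^2$, $r^NF(u)\sim c\,r^Nu^{p+1}$ and $r^{N-1}\O(\rho)K(u)\sim c\,r^{N-1}\rho u$ are of the same order $r^{\,N-2-4/(p-1)}$, so the positive term does not dominate and you cannot conclude $P\to+\infty$. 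In fact the opposite happens: since $\rho(r)\to0$ and $p>2^*-1$, one has $L(\rho(r))<0$ for large $r$, so $P$ is eventually decreasing and, when the integral fails to converge, $P\to-\infty$ (this is Remark \ref{re:alternative}); your claimed sign of the divergence is wrong.

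The paper's proof of $a)\Rightarrow c)$ is much shorter and avoids decay rates altogether: since $u'(r)\to0$, $L(\rho(r))\to\frac1{p+1}-\frac12+\frac1N<0$ in the only relevant case $p>2^*-1$, hence $|L(\rho(r))|\ge c>0$ definitely and the integrand $r^{N-1}\rho\O(\rho)L(\rho)$ has fixed sign for large $r$; therefore $a)$ forces $r^{N-1}\rho\O(\rho)\in L^1(0,+\infty)$, which by \eqref{eq:boundrho} means $\n u\in L^2(\RN)$, and then $u\in\D$ follows from the Brezis--Lieb criterion (\cite[Remark 3]{BrLi}), using that $u$ is continuous and vanishes at infinity. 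Closing the loop, $c)\Rightarrow b)$ as above. I recommend replacing your $a)\Rightarrow c)$ argument with this direct one, or, if you insist on your scheme, proving the lower decay bound independently of the lemma and redoing the order-of-magnitude analysis of $P$ honestly.
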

	\begin{proof}
		$b)\Rightarrow a)$ is obvious.\\
		Let us prove $a) \Rightarrow c)$. Since $r^{N-1}\rho(r)\O(\rho(r))\in L^1[0,+\infty]$, in view of \eqref{eq:boundrho} we deduce $\n u \in L^{2}(\RN).$ Now, since we also know that $u\in L^1_{loc}(\RN)$ (indeed $u$ is continuous) and $\meas (|u|>\a)<+\infty$ for any $\a>0$ (since $\lim_{r\to+\infty}u(r)=0$), by \cite[Remark 3]{BrLi} we have $u\in\D$.\\
$c)\Rightarrow b)$ comes from embedding $\D\hookrightarrow L^{2^*}(\RN)$ and  boundedness of $u$. Indeed we infer that $u\in L^{p+1}(\RN)$ and then we conclude by Lemma \ref{le:Lp+1}.
	\end{proof}
	\begin{remark}\label{re:alternative}
		Since we have that $\lim_{r\to+\infty}u'(r)=0$ (the proof is the same as in \cite[pg. 146]{BLP}), we deduce that definitely $L(\rho(r))<0$. As a consequence, we have that if any among $a, b$ or $c$ in the previous lemma does not hold, then, necessarily $\int_0^{+\infty}r^{N-1}\rho(r)\O(\rho(r))L(\rho(r))\,dr=-\infty$.
	\end{remark}
	
	\begin{theorem}\label{th:alternative}
		Let $u$ be a radial solution to \eqref{eq:mean}. Then
		\begin{equation*}
			u\in L^{p+1}(\RN)\iff \int_0^{+\infty}r^{N-1}\rho(r)\O(\rho(r))L(\rho(r))\,dr=0.
		\end{equation*}
	\end{theorem}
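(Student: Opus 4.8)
The plan is to read off the equivalence from the two lemmas that immediately precede it, Lemma~\ref{le:Lp+1} and Lemma~\ref{le:integ}; together these in fact already show that the three properties ``$u\in L^{p+1}(\RN)$'', ``$u\in\D$'' and ``$\int_0^{+\infty}r^{N-1}\rho(r)\O(\rho(r))L(\rho(r))\,dr=0$'' are mutually equivalent (recall that in this section $p>2^*-1$, so that $p+1>2^*$).

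First I would dispose of the implication ``$\Rightarrow$'': if $u\in L^{p+1}(\RN)$, then Lemma~\ref{le:Lp+1} gives immediately $\int_0^{+\infty}r^{N-1}\rho(r)\O(\rho(r))L(\rho(r))\,dr=0$, and nothing further is required.

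For the converse ``$\Leftarrow$'', suppose that integral vanishes; in particular it is finite, so condition $a)$ of Lemma~\ref{le:integ} holds, whence $u\in\D$ by the implication $a)\Rightarrow c)$ of that lemma. It then remains to pass from $u\in\D$ back to $u\in L^{p+1}(\RN)$, which is exactly the step $c)\Rightarrow b)$ carried out inside the proof of Lemma~\ref{le:integ}: $u$ is bounded — by part~1 of Proposition~\ref{pr:imp} one has $\rho\le a<1$, while $u$ is continuous with $u(r)\to0$ as $r\to+\infty$, so $u\in L^\infty(\RN)$ — and combining this with the Sobolev embedding $\D\hookrightarrow L^{2^*}(\RN)$ and the inclusion $L^{2^*}(\RN)\cap L^\infty(\RN)\subseteq L^{q}(\RN)$ for every $q\ge 2^*$ (valid because $p+1\ge 2^*$), we obtain $u\in L^{p+1}(\RN)$.

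Since the whole argument is merely an assembly of results already in hand, I do not foresee any serious difficulty; the only point deserving a word of care is that the passage $\D\cap L^\infty\subseteq L^{p+1}$ uses $p+1\ge 2^*$, which holds precisely in the supercritical regime under consideration here — and is in fact the structural reason why the statement is confined to $p>2^*-1$.
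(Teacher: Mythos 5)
Your proposal is correct and follows exactly the paper's route: the forward implication is Lemma~\ref{le:Lp+1}, and the converse is the chain $b)\Rightarrow a)\Rightarrow c)$ of Lemma~\ref{le:integ} together with the interpolation $\D\cap L^\infty\subseteq L^{p+1}$ (using $p+1>2^*$) that the paper itself uses in the step $c)\Rightarrow b)$. The paper's proof is just the one-line statement that the theorem follows from these two lemmas, so you have merely made the same assembly explicit.
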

	\begin{proof}
		The conclusion comes from Lemma \ref{le:Lp+1} and Lemma \ref{le:integ}.
	\end{proof}
	
	Previous theorem allows us to conclude that solution found in \cite{BDD}, which we know is in $L^{p+1}(\RN)$, is different from any radial ground state solution found in this paper. Indeed, as showed in the proof of Theorem \ref{th:main2}, our ground states are characterized  by the fact that $L(\rho(r))< 0$ for any $r>0$ and then, of course, $\int_0^{+\infty}r^{N-1}\rho(r)\O(\rho(r))L(\rho(r))\,dr<0$.\\
In particular by Remark \ref{re:alternative}, $\int_0^{+\infty}r^{N-1}\rho(r)\O(\rho(r))L(\rho(r))\,dr=-\infty.$

An interesting question deserving some more investigation is concerned with uniqueness of radial ground state solution belonging to $L^{p+1}(\RN)$. At this time we are not able to say anything about, remaining this an open problem.

Now we proceed studying the asymptotic behaviour at infinity of radial ground state solutions.

	\begin{theorem}\label{th:fstdec}
		If $u$ is a radial solution to \eqref{eq:mean} such that $u\in L^{p+1}(\RN)$, then $u(r) = O\big(\frac 1 {r^{N-2}}\big) $.
	\end{theorem}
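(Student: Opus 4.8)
The plan is to exploit the integral representation \eqref{eq:decr} together with the sharp information, already available from Theorem \ref{th:alternative} and Lemma \ref{le:integ}, that $u\in L^{p+1}(\RN)$ forces $u\in\D$, hence $\n u\in L^2(\RN)$ and in particular $r^{N-1}\rho(r)\O(\rho(r))\in L^1[0,+\infty[$. First I would rewrite \eqref{eq:decr} as
\[
r^{N-1}\O(\rho(r))=\int_0^r s^{N-1}u^p(s)\,ds,
\]
and observe that the right-hand side is monotone increasing; the key point is to show it converges to a finite limit $\ell$ as $r\to+\infty$, i.e. that $u^p\in L^1(\RN)$ in the radial sense $\int_0^{+\infty}s^{N-1}u^p(s)\,ds<+\infty$. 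This follows because $u\in L^{p+1}(\RN)$ is bounded (by $\xi$, from \eqref{eq:fstid}), so $u^p=u^{p-1}\cdot u\le \xi^{p-1}u$ — but that only gives $u^p\in L^1$ if $u\in L^1$, which need not hold; the cleaner route is to combine $u\in L^{p+1}$ with the decay estimate \eqref{eq:decest} from the proof of Lemma \ref{le:Lp+1}, namely $u(r)\le c\,r^{-(N-2-N/(p+1))}$, and check that $p$ supercritical makes $s^{N-1}u^p(s)$ integrable at infinity; near $0$ there is no issue since $u$ is continuous.

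Once $\int_0^{+\infty}s^{N-1}u^p(s)\,ds=\ell<+\infty$ is established, I would argue that $\ell>0$ (the integrand is strictly positive on the interval where $u>0$, which is all of $\R^+$ since $u$ is a ground state), and then from $r^{N-1}\O(\rho(r))\to\ell$ together with $\O(\rho)\sim\rho$ for small $\rho$ — valid because $\rho(r)\to 0$ by Remark \ref{re:alternative} — deduce
\[
\rho(r)=|u'(r)|\sim \frac{\ell}{r^{N-1}}\qquad\text{as }r\to+\infty.
\]
More precisely $\rho(r)\le C/r^{N-1}$ for $r$ large. Integrating this bound from $r$ to $+\infty$ and using $u(r)=\int_r^{+\infty}\rho(s)\,ds$ (legitimate since $\lim_{r\to\infty}u(r)=0$ and $u$ is eventually decreasing) yields
\[
u(r)\le\int_r^{+\infty}\frac{C}{s^{N-1}}\,ds=\frac{C}{(N-2)\,r^{N-2}},
\]
which is exactly $u(r)=O(1/r^{N-2})$ since $N\ge 3$.

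The main obstacle I anticipate is the convergence of $\int_0^{+\infty}s^{N-1}u^p(s)\,ds$: one must be careful that the only a priori decay of $u$ is the polynomial rate \eqref{eq:decest}, and it has to be checked that the exponent $p\big(N-2-\tfrac{N}{p+1}\big)-(N-1)>1$, equivalently $p>2^*-1$, which is precisely our standing hypothesis in this section — so the computation closes, but it is exactly where supercriticality is used. A secondary technical point is justifying $\O(\rho(r))/\rho(r)\to 1$, which requires knowing $\rho(r)\to 0$; this is guaranteed by Remark \ref{re:alternative} (or directly from $r^{N-1}\O(\rho)$ bounded). Everything else is elementary manipulation of \eqref{eq:decr} and the already-proven Proposition \ref{pr:imp}.
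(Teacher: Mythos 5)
Your overall skeleton (prove $\int_0^{+\infty}s^{N-1}u^p(s)\,ds<+\infty$, deduce $\rho(r)\le C/r^{N-1}$ from \eqref{eq:decr}, then integrate to get $u(r)\le C/r^{N-2}$) is the same endgame as the paper's, but the crucial step --- the convergence of $\int_0^{+\infty}s^{N-1}u^p(s)\,ds$ --- is not actually justified by your argument. Plugging the a priori decay \eqref{eq:decest}, $u(r)\le c\,r^{-\a_0}$ with $\a_0=N-2-\frac{N}{p+1}=\frac{(N-2)p-2}{p+1}$, into the integrand gives integrability at infinity exactly when $p\,\a_0>N$, i.e.\ $(N-2)p^2-(N+2)p-N>0$. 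This is \emph{not} equivalent to $p>2^*-1$: at $p=2^*-1=\frac{N+2}{N-2}$ the left-hand side equals $-N<0$, so by continuity the condition fails on a whole right neighborhood of the critical exponent (for $N=3$ it fails for all $5<p\le\frac{5+\sqrt{37}}{2}\approx 5.54$; e.g.\ $N=3$, $p=5.2$ gives $p\,\a_0\approx 2.68<3$). So the claimed equivalence ``$p(N-2-\frac{N}{p+1})-(N-1)>1\iff p>2^*-1$'' is an algebra error, and for $p$ close to critical your one-shot estimate does not show $u^p$ is integrable; the rest of the proof then has nothing to stand on in that range.

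What is true --- and this is exactly why the paper's proof is longer --- is that a \emph{bootstrap} closes the argument: one checks that the starting exponent exceeds the threshold $\frac{2}{p-1}$ precisely when $p>2^*-1$ (indeed $\a_0>\frac{2}{p-1}\iff(N-2)p^2-(N+2)p>0\iff p>2^*-1$), and then the iteration of \eqref{eq:decr}, which upgrades a decay rate $\a$ to $\a p-2$ as long as $\a p\le N$, produces a strictly increasing, divergent sequence of exponents, reaching $\beta$ with $\beta p>N$ in finitely many steps; only then does one get $\rho(r)\le C/r^{N-1}$ and conclude as you do. The paper implements this by first using $u\in\D\hookrightarrow L^{2^*}(\RN)$ together with $u\in L^\infty$ (so $u\in L^q$ for all $q\ge 2^*$), settling the case $p\ge 2^*$ immediately and starting the iteration from the H\"older-derived rate $\frac{Np}{2^*}-2>\frac{2}{p-1}$ when $2^*-1<p<2^*$. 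Your proposal, as written, omits this iterative step entirely, so it only proves the theorem for $p$ large (namely when $(N-2)p^2-(N+2)p-N>0$), not for all supercritical $p$.
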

	\begin{proof}
		First of all, observe that by \eqref{eq:decr} and 1. of Proposition \ref{pr:imp}, we have for a suitable $c>0$
		\begin{equation*}
			\rho(r)\ge \frac c {r^{N-1}}\int_0^r s^{N-1}u^p(s)\,ds\ge \frac c {r^{N-1}}\int_0^1 s^{N-1}u^p(s)\,ds, \hbox{ for any } r>1.
		\end{equation*}
We deduce the following estimate, holding for $r>1$
	   \begin{equation*}
		  u(r)=\int_r^{+\infty} \rho(s)\,ds\ge \frac c{r^{N-2}},
	   \end{equation*}
being $c$ a positive constant depending on  $\|u\|_{L^p(B_1)}$ and on the positive constant $a$ in 1. of Proposition \ref{pr:imp}.

Now, by Lemma \ref{le:integ} and Theorem \ref{th:alternative}, we deduce that if $u$ is a radial ground state solution belonging to $L^{p+1}(\RN)$, then certainly $u\in\D.$ Thus $u\in L^{2^*}(\RN)$ and, since $u\in L^{\infty}(\RN),$ we conclude that $u\in L^q(\RN)$ for any $q\ge 2^*$. At this point there are two possibilities.

If $p\ge 2^*$, then, starting from \eqref{eq:decr}, we get the following inequality
        \begin{equation*}
            \rho(r)\le \frac 1 {r^{N-1}}\int_0^r s^{N-1}u^p(s)\,ds,
        \end{equation*}
which trivially implies that, for a suitable positive constant $c$ depending on $\|u\|_{L^{p}(\RN)}$ and for any $r>0$ we have
        $$u(r)\le \frac {c}{r^{N-2}}.$$

If $2^*-1<p<2^*$, then Holder inequality yields
        \begin{align*}
            \rho(r)&\le \frac 1 {r^{N-1}}\int_0^r s^{N-1}u^p(s)\, ds\\
                    &\le \frac 1 {r^{N-1}}\left( \int_0^r s^{N-1}\, ds\right)^{\frac {2^*-p}{2^*}}\left(\int_0^r s^{N-1}u^{2^*}\!(s)\, ds\right)^{\frac p{2^*}}\\
                    &\le \frac c{r^{N-1}}\, r^{\frac{N(2^*-p)}{2^*}}=\frac c {r^{\frac{Np}{2^*}-1}},\hbox{ for any } r>0,
        \end{align*}
from which we deduce that
        \begin{equation}\label{eq:decatinf}
            u(r)\le \frac c {r^{\frac{Np}{2^*}-2}}, \hbox{ for any } r>0,
        \end{equation}
being $c>0$ a constant depending on $\|u\|_{L^{2^*}\!(\RN)}$.\\
Observe that
        \begin{equation*}
          \frac{Np}{2^*}-2>\frac {N-2}2=\frac N{2^*}> \frac{2}{p-1},
        \end{equation*}
and then, by \eqref{eq:decatinf}, there exists $\a>\frac 2{p-1}$ such that $u(r)\le c/ r^{\a}$ for any $r>0$. By \eqref{eq:decr} we deduce that, for $r>1$,
        \begin{equation}\label{eq:fstin}
            \rho(r) \le \frac c{r^{N-1}}\left(K+\int_1^r \frac 1 {s^{\a p -N+1}}\, ds\right)
        \end{equation}
where we have set $K= \int_0^1 s^{N-1}u^p(s)\, ds$. \\
If $\a p>N,$ we easily conclude as in the case $p\ge 2^*$. So we suppose $\a p \le N$ and computing
the integral in \eqref{eq:fstin}, we get
        \begin{equation*}
            \rho(r) \le \frac c {r^{\a p -1}}, \hbox{ for any } r>0.
        \end{equation*}
We deduce that for any $r>0$
        \begin{equation*}
            u(r)\le \frac c{r^{\a p -2}}.
        \end{equation*}
Observe that, since $\a> N/2^*$, the decay estimate we have obtained improves \eqref{eq:decatinf}. \\
Now, repeating the computations made below with $\a p -2$ in the place of $\a$, we again achieve easily our conclusion if $(\a p -2)p >N,$ otherwise, as before, we get a new improved decay estimate as follows
        \begin{equation*}
            u(r)\le \frac c{r^{(\a p-2) p -2}}=\frac c {r^{\a p^2-2p-2}}, \hbox{ for any } r>0.
        \end{equation*}
Iterating, at the $n$-th step we have
        \begin{equation*}
            u(r)\le \frac c{r^{\a p^n-2 p^{n-1}-\ldots -2}}, \hbox{ for any } r>0.
        \end{equation*}
Computing we observe that
        \begin{equation*}
            \a p^n-2 p^{n-1}-\ldots -2=\a p^n -2 \sum_{k=0}^{n-1} p^k=\a p^n+\frac 2{p-1}(1-p^n)
        \end{equation*}
and then, since $\a>2/(p-1)$ and $p>1$, the sum diverges positively and must achieve a value $\beta$ exceeding $\frac N p$ in a finite number of steps. When this happens, we will deduce our conclusion using inequality \eqref{eq:fstin} with $\beta$ in the place of $\a$.
	\end{proof}

    \begin{theorem}\label{th:dec2}
        Suppose $u$ is a radial solution of \eqref{eq:mean} such that $u\notin L^{p+1}(\RN)$. Then
        there exist $c_1>0$ and $c_2>0$ such that $c_1/r^{\frac{2N}{(N-1)(p+1)-2N}}\le u(r)\le c_2/r^{\frac 2{p-1}}$, for any $r>1.$\\
        Moreover there exists no $\a>2/(p-1)$ such that, definitely, $u(r)\le c/r^\a$ for some $c>0.$
    \end{theorem}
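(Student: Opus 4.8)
The statement contains three assertions — the upper bound $u(r)\le c_2 r^{-2/(p-1)}$, the lower bound $u(r)\ge c_1 r^{-\gamma}$ with $\gamma:=2N/\bigl((N-1)(p+1)-2N\bigr)$, and the non-improvability of the exponent $2/(p-1)$ — and I would establish them in this order. Throughout write $\rho=|u'|$ and $m(r):=\int_0^r s^{N-1}u^p(s)\,ds$, so that \eqref{eq:decr} reads $\O(\rho(r))=m(r)/r^{N-1}$, and use freely that (by Proposition \ref{pr:either} and \eqref{eq:decr}) $u$ is positive and strictly decreasing, that $u(r)\to 0$ so $u(r)=\int_r^{+\infty}\rho$, and that $\rho(r)\to0$, whence $\rho\le\O(\rho)\le\rho/\sqrt{1-a_\xi^2}$ by \eqref{eq:boundrho}. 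For the upper bound, monotonicity of $u$ gives $m(r)\ge\int_{r/2}^r s^{N-1}u^p(s)\,ds\ge c\,r^N u^p(r)$, hence by \eqref{eq:decr} and $\rho\ge\sqrt{1-a_\xi^2}\,\O(\rho)$ one gets $-u'(r)\ge c\,r\,u^p(r)$; dividing by $u^p$ and integrating from $1$ to $r$ yields $u^{1-p}(r)\ge c\,r^2$ for $r$ large, i.e. $u(r)\le c_2 r^{-2/(p-1)}$, and the bound for all $r>1$ follows by enlarging $c_2$ (continuity and positivity of $u$). Note this step uses neither $p>2^*-1$ nor $u\notin L^{p+1}$.

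For the non-improvability, I argue by contradiction: suppose $u(r)\le c\,r^{-\alpha}$ definitely with $\alpha>2/(p-1)$. While $\alpha p<N$, \eqref{eq:decr} gives $\O(\rho(r))\le r^{1-N}\bigl(\int_0^1 s^{N-1}u^p+c\int_1^r s^{N-1-\alpha p}\,ds\bigr)\le c\,r^{1-\alpha p}$ for $r>1$, so $\rho(r)\le c\,r^{1-\alpha p}$ and $u(r)=\int_r^{+\infty}\rho\le c\,r^{2-\alpha p}$ (here $\alpha p>2$ since $\alpha>2/(p-1)$), with new exponent $\alpha p-2>\alpha$ because $\alpha(p-1)>2$. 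Iterating, the exponent follows $\alpha\mapsto\alpha p-2$, which diverges to $+\infty$ exactly as in the proof of Theorem \ref{th:fstdec}; after finitely many steps one reaches an exponent $\beta$ with $\beta p\ge N$, and then \eqref{eq:decr} gives $\O(\rho(r))\le c\,r^{1-N}$, hence $u(r)\le c\,r^{2-N}$, which forces $u\in L^{p+1}(\RN)$ since $(N-2)(p+1)>N$ for $p>2^*-1$ — contradicting the hypothesis.

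For the lower bound, since $u\notin L^{p+1}(\RN)$, Theorem \ref{th:alternative} gives $\int_0^{+\infty}r^{N-1}\rho\,\O(\rho)\,L(\rho)\,dr\neq0$, hence $=-\infty$ by Remark \ref{re:alternative}; integrating \eqref{eq:identity} this means $P(r,u(r),\rho(r))\to-\infty$. Two consequences. First, since $L(\rho)\ge-\ell_0$ with $\ell_0:=\tfrac12-\tfrac1N-\tfrac1{p+1}>0$ (Lemma \ref{le:estdpdr}), one has for $r$ large $0\le-\tfrac{dP}{dr}\le N\ell_0\,r^{N-1}\rho\,\O(\rho)\le N\ell_0\,m^2(r)/r^{N-1}$, so from $-P(r)\to+\infty$ we deduce $\int^{+\infty}m^2(s)/s^{N-1}\,ds=+\infty$. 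Secondly, $P<0$ for large $r$ reads, via \eqref{eq:P} and $K(u)=u/(p+1)$, $r\,u^p(r)<N\,\O(\rho(r))$, i.e. $r\,m'(r)<N\,m(r)$, so $m(r)/r^N$ is eventually decreasing; together with $m(r)\to+\infty$ (which follows from $u\notin L^{p+1}$) this forces $m(r)/r^N\to0$. I would then combine these with the inequality $u(r)\ge c\,m(r)/r^{N-2}$ — obtained from $u(r)=\int_r^{+\infty}\rho\ge\sqrt{1-a_\xi^2}\int_r^{+\infty}m(s)s^{1-N}\,ds\ge\sqrt{1-a_\xi^2}\,m(r)\int_r^{+\infty}s^{1-N}\,ds$, $m$ being increasing — and with the algebraic identity \eqref{eq:P}, written as $-P(r)=\tfrac{u(r)}{p+1}\bigl(N m(r)-r m'(r)\bigr)-r^N H(\rho(r))$ and exploited through the asymptotics $\O(\rho)\sim\rho\sim m/r^{N-1}$, $H(\rho)\sim\tfrac12\rho^2$ as $r\to+\infty$, to close a differential inequality for $m$ whose integration produces $m(r)\ge c\,r^{\mu}$ with $\mu:=\tfrac{(N-1)[(N-2)p-N-2]}{(N-1)(p+1)-2N}>0$ (positive precisely because $p>2^*-1$). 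Feeding this back into $u(r)\ge c\,m(r)/r^{N-2}$ yields $u(r)\ge c_1\,r^{-(N-2-\mu)}=c_1\,r^{-\gamma}$ for $r$ large, hence for all $r>1$.

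The genuinely hard part is this last step: passing from the \emph{integral} information $\int^{+\infty}m^2/s^{N-1}=+\infty$, together with the monotonicity of $m$ and of $m/r^N$, to a \emph{pointwise} polynomial lower bound for $m$ carrying exactly the exponent $\mu$. The obstruction is structural — divergence of the integral plus monotonicity of $m$ only yield lower bounds for $m$ along a sequence $r_j\to+\infty$ — so one is forced to use the self-referential coupling $-P\asymp m\,u$, $(-P)'\asymp m^2/r^{N-1}$, $u\asymp\int_r^{+\infty}m(s)s^{1-N}\,ds$ coming from \eqref{eq:P} and \eqref{eq:identity}, and to keep the various constants (all tending to their Euclidean values as $\rho\to0$) under tight enough control for the exponent to come out sharp. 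One must also allow for the fact that $u$ need not have a clean power-law decay, so the estimates have to absorb lower-order (e.g. logarithmic) corrections.
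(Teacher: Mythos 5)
Two of the three assertions are in good shape. Your upper bound argument (monotonicity of $u$ giving $m(r)\ge c\,r^Nu^p(r)$, hence $-u'\ge c\,r\,u^p$, then one integration) is correct and in fact more elementary than the paper's, which instead extracts $u\le c_2r^{-2/(p-1)}$ from the eventual negativity of $P$ (inequality \eqref{eq:definitely}); yours needs neither $u\notin L^{p+1}(\RN)$ nor $p>2^*-1$. Your proof of the final sentence (iterate $\alpha\mapsto\alpha p-2$ as in Theorem \ref{th:fstdec} until the exponent passes $N/p$, conclude $u=O(r^{2-N})$ and hence $u\in L^{p+1}(\RN)$ since $(N-2)(p+1)>N$, contradiction) is exactly the paper's argument, and it is needed, since the lower-bound exponent $2N/((N-1)(p+1)-2N)$ is strictly larger than $2/(p-1)$ and so does not by itself forbid an intermediate decay rate.

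The genuine gap is the lower bound, which is the core of the theorem: you do not prove it, you outline a coupling between $m$, $u$ and $P$ and then yourself flag the decisive step (closing a differential inequality for $m$ with the sharp exponent $\mu$) as "the genuinely hard part", correctly observing that divergence of $\int^{+\infty}m^2(s)s^{1-N}ds$ plus monotonicity only yields bounds along a sequence. As written, nothing pointwise follows. The ingredient you are missing is the energy identity \eqref{eq:fstid}: differentiating it gives $[H(\rho(r))+F(u(r))]'=-(N-1)\rho(r)\O(\rho(r))/r$, and integrating over $(r,+\infty)$ (the left side tends to $0$) yields $H(\rho(r))+F(u(r))=(N-1)h(r)$ with $h(r)=\int_r^{+\infty}\rho(s)\O(\rho(s))s^{-1}ds$. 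Since $P\to-\infty$ forces \eqref{eq:definitely}, i.e. $H(\rho)+F(u)\le \frac{N}{p+1}\,\O(\rho)\,u/r$, and since $h'(r)=-\rho(r)\O(\rho(r))/r$, one gets the closable logarithmic inequality $\rho/u\le-\frac{N}{(N-1)(p+1)}\,h'/h$; one integration gives $u(r)\ge c\,h(r)^{N/((N-1)(p+1))}\ge c\,H(\rho(r))^{N/((N-1)(p+1))}\ge c\,\rho(r)^{2N/((N-1)(p+1))}$, i.e. $-u'\le c\,u^{(N-1)(p+1)/(2N)}$, and a second integration (the exponent exceeds $1$ precisely because $p>2^*-1$) produces $u(r)\ge c_1 r^{-2N/((N-1)(p+1)-2N)}$. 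Your intended route through $m$, with asymptotic replacements $H(\rho)\sim\frac12\rho^2$, $\O(\rho)\sim\rho$ and "tight control of constants", is not carried out and is not obviously closable in the form you describe; without the identity \eqref{eq:fstid} (or an equivalent device converting $P<0$ into a first-order inequality that can be integrated twice), the proposal does not reach the claimed exponent.
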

    \begin{proof}
        By Theorem \ref{th:alternative} and Lemma \ref{le:identity}, we have
            \begin{equation*}
                \lim_{r\to+\infty} \big\{r^N[H(\rho(r))+F(u(r))]- N r^{N-1}\O(\rho(r))K(u(r))\big\}=-\infty.
            \end{equation*}
        In particular, definitely, we have
            \begin{equation}\label{eq:definitely}
                H(\rho(r))+F(u(r))\le \frac N r \O(\rho(r))K(u(r)),
            \end{equation}
and then, by the definitions of $H, F, \O$ and $K$, we deduce (coming estimates are to be understood definitely for $r$ large)
	\begin{align*}
		1-\sqrt{1-\rho^2(r)}&\le \frac N {p+1} \frac{\rho(r) u(r)}{r}\\
		u^p(r)&\le\frac N r \frac{\rho(r)}{\sqrt{1-\rho^2(r)}}.
	\end{align*}
Now, since $\frac 1 2 \rho^2(r)\le 1- \sqrt{1-\rho^2(r)},$ by the first of the previous inequalities we have
	$$\frac 1 2 \rho(r)\le \frac N {p+1}\frac {u(r)}{r},$$
and, comparing with the second one,
		$$u^p(r)\le \frac {2N^2}{p+1}\frac {u(r)}{r^2\sqrt{1-\rho^2(r)}}.$$
By 1. of Proposition \ref{pr:imp}, we conclude that for some $c_2>0$, we have $u(r)\le c_2/r^{\frac{2}{p-1}}$.

Now we proceed with the below estimate, again assuming that the inequalities we obtain hold definitely for large $r$. Deriving in \eqref{eq:fstid}, we have
	\begin{equation*}
		[H(\rho(r))+F(u(r))]'=-(N-1)\frac{\rho(r)\O(\rho(r))}r,
	\end{equation*}
which, integrated in $(r,+\infty)$, gives
    \begin{equation*}
        [H(\rho(r))+F(u(r))]=(N-1)\int_r^{+\infty}\frac{\rho(s)\O(\rho(s))}s\,ds.
    \end{equation*}
Comparing with \eqref{eq:definitely}, and taking into account the definition of $K$, we have
	\begin{equation}\label{eq:intineq}
		(N-1)\int_r^{+\infty}\frac{\rho(s)\O(\rho(s))}{s}\, ds\le \frac{N}{p+1}\frac{\O(\rho(r))u(r)}{r},
	\end{equation}
where we have used also the fact that $\lim_{r\to +\infty}H(\rho(r))+F(u(r))=0$.
    \end{proof}
Now, if we set $h(r)=\int_r^{+\infty}\frac{\rho(s)\O(\rho(s))}{s}\, ds$ we transform \eqref{eq:intineq} in the following differential inequality
	\begin{equation*}
		\frac \rho u \le -\frac N{(N-1)(p+1)}\frac{h'(r)}{h(r)}.
	\end{equation*}
Then, integrating in $[R,r]$, we have the following inequalities, holding for a suitable $c>0$
	\begin{align*}
		u(r)&\ge c[h(r)]^{\frac N{(N-1)(p+1)}}\nonumber=c\left[\frac  {H(\rho(r))+F(u(r))}{N-1}\right]^{\frac N{(N-1)(p+1)}}\nonumber\\
			&\ge c [H(\rho(r))]^{\frac N{(N-1)(p+1)}}\nonumber\\
			&\ge  c\rho^{\frac{2N}{(N-1)(p+1)}}
	\end{align*}
that is, for a suitable $c>0$,
	\begin{equation*}
		-\frac{u'}{u^\frac{(N-1)(p+1)}{2N}}\le c.
	\end{equation*}
Integrating in $[R,r]$, we have that there exists $c>0$ such that
	\begin{equation*}
		\left(\frac{(N-1)(p+1)}{2N}-1\right)\left(\frac 1 {u(r)}\right)^{\frac{(N-1)(p+1)}{2N}-1}\le c r.
	\end{equation*}
Therefore we have
	\begin{equation*}
		(u(r))^{\frac{(N-1)(p+1)-2N}{2N}}\ge \frac c r
	\end{equation*}
which leads to our conclusion.

The final sentence derives directly from the arguments developed in the proof of Theorem \ref{th:fstdec}, assuming by contradiction the existence of $\a > \frac 2{p-1}$ such that, definitely, $u(r)	\le c/r^\a$.
\\
\\
{\it Proof of Theorem \ref{th:main3}.}$\quad$It is a consequence of Theorems \ref{th:fstdec} and \ref{th:dec2}.

\section{Radial sign-changing bound states}\label{se:signch}

We recall that, with {\it bound states}, we mean solutions going to zero as $r$ goes to $+\infty$. This section is devoted to showing that,
all radial sign-changing solutions are bound state. Now, since all radial solutions of our mean curvature equation are sign-changing when $p$ is taken subcritical, we would provide a multiplicity result characterizing any solution for $p<2^*-1$. As regards supercritical case, a first important result, actually holding for any $p>1$,  is the following

	\begin{theorem}\label{th:signch}
		If $p>1$, then there exists $\tilde \xi>0$ such that any solution of \eqref{cauchy} corresponding to $\xi\ge\tilde\xi$ is sign-changing.
	\end{theorem}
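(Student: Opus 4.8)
The plan is to argue by contradiction and to exploit the fundamental identity \eqref{eq:identity} together with the a priori bound on $\rho_\xi$ coming from part 3 of Proposition \ref{pr:imp}, but now in the \emph{opposite regime}: instead of taking $\xi$ small so that $\rho_\xi$ stays in the zone where $L<0$, I want $\xi$ \emph{large} so that the gradient term is forced to be genuinely away from $0$ on a substantial set, where, by \eqref{eq:est}, the bracket in \eqref{eq:identity} is controlled by a strictly negative-leaning expression only when $\rho$ is small — so here the sign of $\frac{dP}{dr}$ is no longer decisive, and a different mechanism must be used. The cleanest route is energetic: suppose by contradiction that there is a sequence $\xi_n\to+\infty$ for which $u_{\xi_n}$ is a ground state (positive, decreasing, vanishing at infinity). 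For each such ground state, identity \eqref{eq:fstid} in the limit $r\to+\infty$ gives $(N-1)\int_0^{+\infty}\frac{\rho_{\xi_n}\O(\rho_{\xi_n})}{s}\,ds=F(\xi_n)$, and by \eqref{eq:impest} the total curvature-type energy $H(\rho_{\xi_n}(r))\le F(\xi_n)$ with $H(\rho)\to+\infty$ only as $\rho\to1$; combined with part 1 of Proposition \ref{pr:imp} one sees that as $\xi_n\to\infty$ the solutions must concentrate, and I would rescale via Lemma \ref{le:comparison}. Setting $\eps_n=\xi_n^{-(p+1)}$ (up to the constant matching $\xi=\eps^{1/(p+1)}$), the rescaled function $w_n$ solves \eqref{eq:eqeps} with parameter $\eps_n$, but now $\eps_n\to+\infty$, and one checks that \eqref{eq:eqeps} with a \emph{large} parameter is, after a further normalization, a singular perturbation: the operator degenerates and the equation forces $|w'|$ close to $1/\sqrt{\eps_n}$, i.e. $w$ must behave almost linearly on an $O(1)$ scale, hence cannot stay positive.

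Concretely, the key steps in order are: (i) normalize \eqref{eq:eqeps} by the substitution $w(r)=\eps^{-1/2}z(r)$ or $r\mapsto \eps^{\pm a} r$ to turn the large-$\eps$ equation into one where the curvature operator has a \emph{small} coefficient multiplying the nonlinearity, so that to leading order $z$ solves $(z'/\sqrt{1-(z')^2})'+\frac{N-1}{r}(z'/\sqrt{1-(z')^2})\approx 0$, whose solutions with $z'(0)=0$ are constants — reflecting that on the natural scale the nonlinear restoring force is negligible and $z$ stays essentially flat and then, once the accumulated integral in \eqref{eq:decr} builds up, drops through zero; (ii) make this rigorous by using \eqref{eq:decr} directly: for $u=u_\xi$, $\O(\rho(r))=-r^{1-N}\int_0^r s^{N-1}u^p(s)\,ds$, and while $u\ge \xi/2$ on some initial interval $[0,r_\xi]$ one gets $\O(\rho(r))\le -c\, r\,\xi^p$ for $r\le r_\xi$, forcing $\rho(r)\to 1$ unless $r_\xi$ is small, precisely $r_\xi = O(\xi^{-p})$ or so; (iii) then $u$ has dropped from $\xi$ to $\xi/2$ over a short interval while still having large negative slope (indeed $\O(\rho)$ bounded away from $0$ uniformly in $\xi$ large at $r=r_\xi$), so $u'$ does not relax back to $0$ quickly enough, and iterating the estimate $u$ reaches $0$ in finite time; (iv) conclude by Proposition \ref{pr:either} that $u_\xi$ is sign-changing, contradicting the assumption, which yields the threshold $\tilde\xi$.

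The main obstacle I expect is step (iii)–(iv): showing that once $\rho$ has been driven up near $1$ (or merely bounded below by a fixed positive constant) on the short initial layer, the solution \emph{cannot recover} and must cross zero. The worry is that after $u$ decreases, the sign of the nonlinearity could in principle let $u'$ increase back toward $0$; one has to show quantitatively, using \eqref{eq:eq} and the bound on $u''$ implied by \eqref{eq:boundrho}, that the "momentum" accumulated in $u'$ carries $u$ below $0$ before it can be arrested — essentially a lower bound on $R_0(u_\xi)$ being impossible, or an energy argument showing the curvature energy $H(\rho)$ near $r_\xi$ is too large to be compatible with $F(\xi)$ as the sole budget while $u$ stays positive all the way to infinity. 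An alternative, possibly slicker, implementation of the obstacle: combine \eqref{eq:impest} (giving $H(\rho(r))\le F(\xi)$ everywhere, hence $\rho(r)\le a_\xi<1$) with the observation that if $u_\xi$ were a ground state then Lemma \ref{le:nointersection} would force every sign-changing solution with smaller initial datum to intersect its graph; but by Lemma \ref{le:signch} (valid there only for $p<2^*-1$) — so for general $p>1$ this shortcut fails and one genuinely needs the quantitative blow-up-of-slope argument sketched above. I would therefore carry out steps (i)–(ii) carefully to pin down the layer width $r_\xi$, and invest the bulk of the work in the comparison argument of step (iii).
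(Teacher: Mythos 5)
Your proposal has a genuine gap, and you name it yourself: the whole content of the theorem is the claim that, once the slope has been driven up on the initial layer, the solution \emph{cannot} relax back and stay positive, i.e.\ your step (iii). As written this is only announced, not proved, so the argument is incomplete precisely where it matters. Moreover the quantitative claim in step (ii) cannot be right as stated: by \eqref{eq:boundrho} (indeed by \eqref{eq:impest}) every solution satisfies $\rho(r)<1$, so $u_\xi$ descends with speed strictly less than $1$ and a drop from $\xi$ to $\xi/2$ requires a radius interval of length at least $\xi/2$, not $O(\xi^{-p})$. What \eqref{eq:decr} actually gives is that $\O(\rho(r))$ becomes large (so $\rho$ gets close to $1$) after a radius of order $\xi^{-p}$, after which $u$ descends \emph{almost linearly} over a long interval; if you want to salvage your route you must exploit that the accumulated integral $\int_0^r s^{N-1}u^p$ keeps $\rho$ near $1$ on an interval such as $[r_1,3r_1]$, where $r_1$ is the first radius with $u=\xi/2$ (note $\xi/2\le r_1\le C\xi$), forcing a further descent of size $\approx 2r_1\ge\xi>u(r_1)$ and hence a zero of $u$; then Proposition \ref{pr:either} concludes. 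The rescaling discussion in step (i) (Lemma \ref{le:comparison} with $\eps\to+\infty$, ``$z$ is constant to leading order'') does not by itself yield a sign change and can be dropped.

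For comparison, the paper's proof avoids all asymptotics in $\xi$: by \cite[Example 5]{BJT} there is a large $\tilde R$ for which the Dirichlet problem on the ball of radius $\tilde R$ has a positive radial solution; extended as a solution of \eqref{cauchy} this is a sign-changing solution with initial datum smaller than $\tilde R$ (because $\rho<1$). For $\xi\ge\tilde\xi:=\tilde R$, the same bound $\rho<1$ gives $u_\xi(r)>\xi-r\ge\tilde R-r>u_{\tilde\xi}(r)$ on $[0,\tilde R[$, so the two graphs do not intersect in $[0,\tilde R]\times\,]0,+\infty[$; if $u_\xi$ were a ground state this would contradict Lemma \ref{le:nointersection}, and Proposition \ref{pr:either} then forces $u_\xi$ to be sign-changing. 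So the paper trades your quantitative slope analysis for a comparison with a known Dirichlet solution plus the intersection lemma; your strategy could in principle work and would be self-contained (no appeal to \cite{BJT}), but only after the missing step (iii) is carried out and step (ii) is corrected as above.
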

	\begin{proof}
		By \cite[Example 5]{BJT}, we know that there exists a sufficiently large $\tilde R>0$ such that problem
	\begin{equation*}
\left\{
\begin{array}{ll}
\left(\frac{u'}{\sqrt{1-(u')^2}}\right)'+\frac{N-1}{r}\frac{u'}{\sqrt{1-(u')^2}}+u^p=0,
\\
u'(0) =0,\\
u(\tilde R)=0,
\end{array}
\right.
\end{equation*}
possesses a positive solution.\\
Now, we set $\tilde \xi=\tilde R$ and pick any $\xi\ge\tilde\xi$. By  1. of Proposition \ref{pr:imp}, we easily deduce that the graph of $u_\xi$ never intersects that of $u_{\tilde\xi}$ in $[0,\tilde R]\times ]0,+\infty[$. Then, since Lemma \ref{le:nointersection} states that $u_\xi$ can not be a ground state, by Proposition \ref{pr:either} it is sign-changing.
	\end{proof}

Now we study the behaviour at infinity of sign-changing solutions.

 	\begin{theorem}\label{th:to0}
		If $u=u_\xi$ is solution of \eqref{cauchy}, then it is global and $\displaystyle\lim_{r\to +\infty}u(r)=0$.
	\end{theorem}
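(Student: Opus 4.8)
The plan is to reduce to sign-changing solutions and then run an energy-type argument. Globality is already provided by point~2 of Proposition~\ref{pr:imp}, and by Proposition~\ref{pr:either} a solution of \eqref{cauchy} is either a ground state or sign-changing; for a ground state the limit $0$ is exactly what is proved inside Proposition~\ref{pr:either}. So assume $u=u_\xi$ is sign-changing and set $E(r):=H(\rho(r))+F(u(r))\ge 0$. Since \eqref{eq:fstid} says $E(r)=F(\xi)-(N-1)\int_0^r\rho(s)\O(\rho(s))/s\,ds$, we get $E'(r)=-(N-1)\rho(r)\O(\rho(r))/r\le 0$, so $E$ decreases to some $E_\infty\ge 0$ and, letting $r\to+\infty$ in \eqref{eq:fstid},
\[
\int_0^{+\infty}\frac{\rho(s)\O(\rho(s))}{s}\,ds=\frac{F(\xi)-E_\infty}{N-1}<+\infty .
\]
Because $\tfrac1{p+1}|u(r)|^{p+1}=F(u(r))\le E(r)$, it suffices to show $E_\infty=0$. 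Suppose, for contradiction, $E_\infty>0$, and put $M:=\big((p+1)E_\infty\big)^{1/(p+1)}>0$.

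First I would dispose of the case where $u$ is eventually monotone: then $u\to L$, and inserting this into \eqref{eq:eq} gives $\big(\tfrac{u'}{\sqrt{1-(u')^2}}\big)'(r)\to -f(L)$, which would make $\tfrac{u'}{\sqrt{1-(u')^2}}$ unbounded unless $f(L)=0$, i.e.\ $L=0$, contradicting $E_\infty>0$. Hence $u$ has critical points $\tau_k\to+\infty$ (isolated, since $u\equiv0$ is the only solution with $u(\tau)=u'(\tau)=0$), and between consecutive ones $u$ is strictly monotone. From $\big(r^{N-1}\tfrac{u'}{\sqrt{1-(u')^2}}\big)'=-r^{N-1}f(u)$ together with $u'(\tau_k)=0$ one sees that $u$ cannot reach a new critical point without first changing sign; so consecutive critical points $\tau_k<\tau_{k+1}$ enclose a single zero $\sigma_k$ of $u$. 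Since $\rho(\tau_k)=0$, we have $E(\tau_k)=F(u(\tau_k))$, hence $|u(\tau_k)|\to M$; and $H(\rho(\sigma_k))=E(\sigma_k)\to E_\infty$, hence $\rho(\sigma_k)\to\rho_*:=H^{-1}(E_\infty)>0$. Thus $u$ oscillates between heights $\pm M$, crossing the axis transversally.

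The heart of the argument is a uniform upper bound on the oscillation period, $\tau_{k+1}-\tau_k\le T$ for all large $k$. I would split a half-period $[\tau_k,\tau_{k+1}]$ into three parts. (i) From $\tau_k$ to the first time $|u|=M/2$: there $|f(u)|\ge f(M/2)>0$, so integrating $\big(r^{N-1}\tfrac{u'}{\sqrt{1-(u')^2}}\big)'=-r^{N-1}f(u)$ from $\tau_k$ and using $\rho\le a_\xi$ (point~1 of Proposition~\ref{pr:imp}) gives $|u'(r)|\ge c\,(r-\tau_k)$ near $\tau_k$, so $|u|$ falls from $\le M+1$ to $M/2$ within a bounded $r$-interval. (ii) The band $|u|\le M/2$, where $H(\rho)=E-F(u)\ge E_\infty-F(M/2)>0$ forces $\rho\ge\rho_{**}>0$, so it is crossed in bounded $r$-time. (iii) Symmetric to (i). This yields $\tau_k=O(k)$. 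Since moreover $|u'|\le a_\xi$ and $\big|\big(\tfrac{u'}{\sqrt{1-(u')^2}}\big)'\big|$ is bounded for $r$ large, we have $\rho(s)\O(\rho(s))\ge\kappa>0$ on an interval $I_k\ni\sigma_k$ of a fixed length, contained in $(\tau_k,\tau_{k+1})$ for $k$ large; therefore
\[
\int_0^{+\infty}\frac{\rho(s)\O(\rho(s))}{s}\,ds\ \ge\ \sum_{k\ \mathrm{large}}\frac{\kappa\,|I_k|}{\tau_{k+1}}\ =\ +\infty ,
\]
contradicting the integrability displayed above. Hence $E_\infty=0$ and $u(r)\to 0$ as $r\to+\infty$.

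I expect the uniform bound on the oscillation period — the three-part estimate (i)--(iii) — to be the main obstacle: without it the series $\sum 1/\tau_k$ could converge, and the dissipation produced by the $\tfrac{N-1}{r}$ term would be too weak to force the energy down to zero.
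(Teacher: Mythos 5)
Your argument is correct and follows essentially the same route as the paper's proof: globality and the ground-state case are handled through Propositions \ref{pr:imp} and \ref{pr:either}, and for sign-changing solutions the contradiction is obtained, exactly as in the paper's second case, from the energy identity \eqref{eq:fstid} together with the convergence of $\int_0^{+\infty}\rho(s)\O(\rho(s))/s\,ds$ (point 4 of Proposition \ref{pr:imp}), by combining a uniform bound on the spacing of the zeros with a uniform lower bound for $\rho$ on fixed-length intervals around them, so that the integral is minorized by a divergent harmonic-type series. The only step you should write out is the eventually-monotone case: after concluding $L=0$, note that $H(\rho(r))\to E_\infty>0$ would force $|u'|$ to stay bounded away from zero for large $r$, which is incompatible with $u$ being bounded, and this yields the claimed contradiction.
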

	\begin{proof}
		By 2. of Proposition \ref{pr:imp}, certainly all solutions are global. By Proposition \ref{pr:either}, we only have to prove the following claim: every sign-changing solution of \eqref{cauchy} originates a solution to \eqref{eq:meanpm}.\\
So, let $u$ be a sign-changing solution and $\A:=\{r>0\mid u(r)=0\}$. We distinguish the two possibilities and show that, in any case, $\lim_{r\to+\infty}u(r)=0$.
    \begin{description}
        \item[{\it 1st case: }]$\exists \max\A=\hat R$.

        Suppose, to fix ideas, that $u(r)<0$ in $]\hat R,+\infty[$ and, consequently, $u'(\hat R)<0$.

If the sign of $u'$ does not change anymore, we have
	$$u(r)\searrow k\in [-\infty, 0[.$$
Actually $k\neq-\infty$ since $u$ is bounded  (otherwise in \eqref{eq:fstid} the difference $F(\xi)- F(u(r))$ becomes somewhere negative) and then, using \eqref{eq:eq} and recalling \eqref{eq:boundrho}, we have $\lim_{r\to+\infty}u''(r)>0$, and then $\lim_{r\to+\infty}u'(r)=+\infty$, obviously a contradiction.

If $u'(R)=0$ at some $R>\hat R$, then for any $r>R$ we have $u'(r)>0$ since no more critical point can be present at the right of $R$ (otherwise, since $u$ is definitely negative, by \eqref{eq:eq} at this point we should have minimum and this is impossible).\\
We again deduce that there exists $k=\lim_{r\to+\infty} u(r)\le 0$ and, supposing $k<0$, we achieve contradiction as before.

\item[{\it 2nd case: }]$\not\!\exists\max\A$.

Set $M=(N-1)\int_0^{+\infty}\frac{\rho^2(s)}{s\sqrt{1-\rho^2(s)}}\, ds$ which is in $\R$ by 4. of Proposition \ref{pr:imp}. If $M=\frac{\xi^{p+1}}{p+1}$ then we conclude by \eqref{eq:fstid}. Suppose by contradiction $M<\frac{\xi^{p+1}}{p+1}$ and set $\eps=k\left(\frac{\xi^{p+1}}{p+1}-M\right)$ where $k>0$ is such that, taking into account  \eqref{eq:boundrho} and the fact that $H(\rho(r))=O(\rho^2(r))$ for $r\to 0^+$, for any $r>0$ we have
	\begin{equation}\label{eq:rho^2rho}
		\rho(r)>k H(\rho(r)).
	\end{equation}
Now, if $\bar r \in\A$, since $\rho(r)$ is bounded, there exists $\eta>0$ not depending on the choice of $\bar r$ such that
	\begin{equation*}
		\frac 1 {p+1}|u(r)|^{p+1}< \frac \eps {2k}\quad\hbox{in }\left[\bar r -\frac \eta 2, \bar r+\frac \eta 2\right].
	\end{equation*}
Then, by \eqref{eq:fstid}, for any $r\in \left[\bar r -\frac \eta 2, \bar r+\frac \eta 2\right]$ we have
	\begin{equation}\label{eq:rhogreater}
		\rho(r)>k\left(\frac 1 {p+1} \xi^{p+1}- M -\frac \eps {2k}\right)=\frac\eps 2.
	\end{equation}
Observe that, by \eqref{eq:rhogreater}, $\A$ can be organized as an increasing divergent sequence $(r_n)_{n\ge 1}$.

Now we would obtain a uniform superior estimate for the distance between two consecutive points in $\A$. Call $R_1$ and $R_2$ two elements in $\A$, with $R_1<R_2$ and, to fix ideas, suppose $u(r)>0$ in $]R_1,R_2[$.\\
Set $\a>0$ such that $\frac 1{p+1}(\xi^{p+1}-\a^{p+1})-M>0,$ $\g\in]0,\a[$ and $\tilde R >0$ such that for any $r>\tilde R$ we have
	\begin{equation*}
		\frac {N-1}r\frac {\rho(r)}{\sqrt{1-\rho^2(r)}}<\g.
	\end{equation*}

		 If $u^p(r)\ge\a$, by \eqref{eq:eq} we have
	\begin{equation*}
		\frac{u''(r)}{\sqrt{(1-\rho^2(r))^3}}\le- \a+\g<0,\hbox{ for any } r>\tilde R,
	\end{equation*}
if $u^p(r)<\a$, by \eqref{eq:fstid} we have
	\begin{equation*}
		H(\rho(r))\ge \frac 1{p+1}(\xi^{p+1}-\a^{p+1})-M>0,\hbox{ for any } r>0.
	\end{equation*}
By these computations, taking into account \eqref{eq:boundrho} and \eqref{eq:rho^2rho}, we conclude that there exists $\d>0$ such that, for any $r>\tilde R$, we have
	\begin{itemize}
		\item   $u''(r)\le -\d$ if $u^p(r)\ge \a$,
		\item   $\rho(r)\ge \d$ if $u^p(r)<\a$.
	\end{itemize}
Starting from $R_1$, by the second estimate we deduce that the largest possible interval before $u$ passes the level $\sqrt[p]{\a}$ is $[R_1, R_1+\sqrt[p]\a/\d[$.\\
By the second estimate, considering the parabola $\G$ having second derivative $-\d$ and passing through the point $(\sqrt[p]\a/\d,\sqrt[p]\a)$ with velocity $1$, we can claim that certainly the graph of $u$ touches a second time the line $u=\sqrt[p]\a$ before the parabola $\G$ does. Then the largeness of the interval $\{r\in]R_1,R_2[\mid u(r)\ge \sqrt[p]\a\}$ is less than $2/\d$.\\
Finally, using again the second estimate in the same way as before, we deduce that the largeness of the second connected component of $\{r\in]R_1,R_2[\mid u(r)< \sqrt[p]\a\}$ is less than $\sqrt[p]\a/\d$.

So, if we set $\sigma=\frac 2 \d(1+\sqrt[p]\a)$, we have that, for any $n\ge n_0$ with $n_0$ large enough,
    \begin{equation}\label{eq:boundint}
        r_{n+1}-r_n\le \sigma.
    \end{equation}

By \eqref{eq:rhogreater} and \eqref{eq:boundint},
    \begin{align*}
        \frac M{N-1}&= \int_0^{+\infty}\frac{\rho^2(s)}{s\sqrt{1-\rho^2(s)}}\, ds\\
                    &\ge \lim_n\sum_{n\ge n_0}\int_{r_n-\frac \eta 2}^{r_n+\frac\eta 2}\frac{\rho^2(s)}{s\sqrt{1-\rho^2(s)}}\, ds\\
                    &\ge \lim_n\frac {\eta\eps^2}{2\sqrt{4-\eps^2}}\sum_{n\ge n_0}\frac 1 {r_n+\eta/2}\\
                    &\ge \lim_n\frac {\eta\eps^2}{2\sqrt{4-\eps^2}}\sum_{n\ge 0}\frac 1 {r_{n_0}+n\sigma+\eta/2}=+\infty.
    \end{align*}
This is obviously a contradiction by which we conclude.
\end{description}

\end{proof}
{\it Proof of Theorem \ref{th:main4}.}$\quad$It is a consequence of Theorems \ref{th:signch} and \ref{th:to0}.

\end{document}